\newcommand{\myauthor}{Benjamin Antieau and Ben Williams}
\newcommand{\mytitle}{Topology and purity for torsors}
\title{\mytitle}
\author{\myauthor}
\definecolor{todo}{rgb}{1,0,0}
\definecolor{conditional}{rgb}{0,1,0}
\definecolor{e-mail}{rgb}{0,.40,.80}
\definecolor{reference}{rgb}{.20,.60,.22}
\definecolor{mrnumber}{rgb}{.80,.40,0}
\definecolor{citation}{rgb}{0,.40,.80}
\DeclareMathAlphabet{\mathpzc}{OT1}{pzc}{m}{it}
\DeclareMathOperator{\PGL}{PGL}
\DeclareMathOperator{\BSL}{BSL}
\DeclareMathOperator{\SL}{SL}
\DeclareMathOperator{\GL}{GL}
\DeclareMathOperator{\tors}{tors}
\DeclareMathOperator{\Spec}{Spec}
\DeclareMathOperator{\Pic}{Pic}
\DeclareMathOperator{\Hoh}{H}
\DeclareMathOperator{\Eoh}{E}
\DeclareMathOperator{\Br}{Br}
\DeclareMathOperator{\W}{W}
\DeclareMathOperator{\I}{I}
\DeclareMathOperator{\im}{im}
\DeclareMathOperator{\ind}{ind}
\renewcommand{\P}{\operatorname{P}}
\newcommand{\topo}{{\mathrm{top}}}
\newcommand{\et}{\mathrm{\acute{e}t}}
\newcommand{\weq}{\simeq}
\newcommand{\B}{\mathrm{B}}
\newcommand{\colliot}{Colliot-Th\'{e}l\`{e}ne~}
\newcommand{\we}{\simeq}
\newcommand{\iso}{\cong}
\newcommand{\CC}{\mathds{C}}
\newcommand{\QQ}{\mathds{Q}}
\newcommand{\ZZ}{\mathds{Z}}
\newcommand{\ZZp}{\mathds{Z}_{(p)}}
\newcommand{\PP}{\mathds{P}}
\newcommand{\Gm}{\mathds{G}_{m}}
\newcommand{\op}{\mathrm{op}}
\newcommand{\Ascr}{\mathscr{A}}
\newcommand{\Fscr}{\mathscr{F}}
\newcommand{\Oscr}{\mathscr{O}}
\let\oldmarginpar\marginpar
\renewcommand\marginpar[1]{\-\oldmarginpar[\raggedleft\footnotesize #1]%
{\raggedright\footnotesize #1}}
\newcommand{\tensor}{\otimes}
\newcommand{\Brm}{\mathrm{B}}
\newcommand{\BP}{\mathrm{BP}}
\newcommand{\BPGL}{\mathrm{BPGL}}
\newcommand{\Mrm}{\mathrm{M}}
\newcommand{\Lp}{\mathrm{L}_{(p)}}
\theoremstyle{plain}
\newtheorem{theorem}{Theorem}[section]
\newtheorem*{theorem*}{Theorem}
\newtheorem{lemma}[theorem]{Lemma}
\newtheorem{scholium}[theorem]{Scholium}
\newtheorem{proposition}[theorem]{Proposition}
\newtheorem{conjecture}[theorem]{Conjecture}
\newtheorem{corollary}[theorem]{Corollary}
\newtheoremstyle{named}{}{}{\itshape}{}{\bfseries}{.}{.5em}{#1 \thmnote{#3}}
\theoremstyle{named}
\theoremstyle{definition}
\newtheorem{example}[theorem]{Example}
\newtheorem{question}[theorem]{Question}
\theoremstyle{remark}
\begin{document}

\begin{abstract}
    We study the homotopy theory of the classifying space of the complex projective
    linear groups to prove that purity fails for $\PGL_p$-torsors on regular noetherian schemes when
    $p$ is a prime. Extending our previous work when $p=2$, we obtain a negative answer to a question of \colliot and
    Sansuc, for all $\PGL_p$. We also give a new example of the failure of purity for the
    cohomological filtration on the Witt group, which is the first example of this kind of a
    variety over an algebraically closed field.
\end{abstract}

\maketitle

\tableofcontents

\section{Introduction}

Let $X$ be a regular noetherian integral scheme, let $G$ be a smooth reductive group scheme over $X$, and let $K$ be the function field of
$X$. Consider the injective map
\begin{equation}\label{eq:purity}
    \im\left(\Hoh^1_{\et}(X,G)\rightarrow\Hoh^1_{\et}(\Spec K,G)\right)\rightarrow\bigcap_{x\in
    X^{(1)}}\im\left(\Hoh^1_{\et}(\Spec\Oscr_{X,x},G)\rightarrow\Hoh^1_{\et}(\Spec
    K,G)\right),
\end{equation}
where the intersection is over all codimension-$1$ points of $X$.
\colliot and Sansuc ask in~\cite{colliot-thelene-sansuc}*{Question 6.4} whether this map is
surjective. When it is, we say that \textit{purity\/} holds for $\Hoh^1_{\et}(X,G)$.

Purity trivially holds for $\Hoh^1_{\et}(X,G)$ when $G$ is special in the sense of Serre, for example if $G=\SL_n$, since
$\Hoh^1_{\et}(\Spec K,G)$ is a single point in this case. It holds for $\Hoh^1_{\et}(X,G)$
where $G$ is a finite type $X$-group scheme of multiplicative type
by~\cite{colliot-thelene-sansuc}*{Corollaire 6.9}.
It is also known to hold in many cases when $X$ is the spectrum of a regular local ring containing a
field of characteristic $0$. With this assumption, purity was proven for $\Hoh^1_{\et}(X,G)$ when $G$ is a
split group of type $\mathrm{A}_n$, a split orthogonal or special orthogonal group, or a split
spin group by
Panin~\cite{panin-purity} and also when $G=\mathrm{G}_2$
by Chernousov and Panin~\cite{chernousov-panin}. The local purity conjecture
asserts that purity holds for $\Hoh^1_{\et}(X,G)$ whenever $X$ is the spectrum of a regular noetherian
integral semi-local ring and $G$ is a smooth reductive algebraic $X$-group scheme.
Finally, purity holds for $\Hoh^1_{\et}(X,G)$ if the Krull dimension of $X$ is at most $2$
by~\cite{colliot-thelene-sansuc}*{Theorem 6.13}.

Purity is often considered along with another property, the so-called injectivity property,
which is said to hold when $\Hoh^1_{\et}(X,G)\rightarrow\Hoh^1_{\et}(U,G)$ has trivial
kernel for
all $U\subseteq X$ containing $X^{(1)}$. In fact,
Grothendieck and Serre conjectured that this map is always injective when $X$ is the
spectrum of a regular local ring $R$ and $G$ is a reductive $X$-group
scheme. This has been proved recently using affine Grassmannians by Fedorov and
Panin~\cite{fedorov-panin} when $R$ contains an infinite field following partial progress by
many other mathematicians. They prove more strongly that
$\Hoh^1_{\et}(X,G)\rightarrow\Hoh^1_{\et}(U,G)$ is injective.
The injectivity property for torsors is usually only sensible when $X$ is in fact the
spectrum of a local ring: otherwise it typically fails, even for $G=\Gm$. 

When $X$ is neither local nor low-dimensional and $G$ is a non-special semisimple algebraic
group, no results were known
about purity for torsors until our paper~\cite{aw4}, which showed that purity fails for
$\PGL_2$-torsors on smooth affine complex $6$-folds in general. It is the purpose of this
paper to use $p$-local homotopy theory to extend our previous result to $\PGL_p$ for all
$p$.

\begin{theorem}
    Let $p$ be a prime. Then, there exists a smooth affine complex variety $X$ of dimension
    $2p+2$ such that purity fails for $\Hoh^1_{\et}(X,\PGL_p)$.
\end{theorem}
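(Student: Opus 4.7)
The strategy is to extend our previous argument for $p=2$ in \cite{aw4} to all primes using the $p$-local obstruction theory of $\B\PU(p) \simeq \B\PGL_p(\CC)$. A $\PGL_p$-torsor is the same datum as an Azumaya algebra of degree $p$, and since purity for the Brauer group of a regular scheme is classical, failure of purity for $\PGL_p$ amounts to producing a period-$p$ Brauer class $\alpha \in \Br(X)$ on a smooth affine complex $X$ such that $\alpha$ is represented by a degree-$p$ Azumaya algebra after each codimension-$1$ localization but not on $X$ itself. The obstruction to such a global representation is detected by the Postnikov tower of $\B\PU(p)$.

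First, I would identify the relevant topological obstruction. Since $\pi_2(\B\PU(p)) = \ZZ/p$ and the next nonzero homotopy groups occur in even degrees starting at $\pi_4$, the first $k$-invariant above the Brauer map $\B\PU(p) \to K(\ZZ,3)$ produces a secondary obstruction whose lowest nontrivial instance sits in degree $2p+2$. Consequently, every period-$p$ class on a topological space of dimension $\leq 2p+1$ lifts to a $\PU(p)$-bundle, while in dimension $2p+2$ the lift can fail. This topological period-index phenomenon fixes the dimension appearing in the statement.

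Second, I would realize the obstruction algebraically via a Godeaux-Serre approximation: produce a smooth projective complex variety $\bar X$ of dimension $2p+2$ carrying a period-$p$ Brauer class whose degree-$(2p+2)$ obstruction is nonzero topologically, and hence algebraically by comparison between \'etale and singular cohomology. Taking $X = \bar X \setminus Z$ for $Z$ a general complete intersection of codimension $\geq 2$ cut out by sections of very ample line bundles makes $X$ smooth affine of dimension $2p+2$, and a Gysin sequence argument ensures that the obstruction class survives restriction from $\bar X$ to $X$.

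The main obstacle is reconciling the two conditions: the obstruction class must persist on $X$ while vanishing after each codimension-$1$ localization $\Spec \Oscr_{X,x}$. Vanishing on these codimension-$1$ local rings is a question in effectively lower cohomological dimension and requires verifying, via the Postnikov structure of $\B\PU(p)$, that the restricted class is represented by an Azumaya algebra of degree $p$. Non-vanishing on $X$ demands a delicate $p$-local cohomology computation on the Godeaux-Serre model, using the explicit characteristic classes in $\Hoh^{2p+2}(\B\PU(p);\ZZp)$. Selecting the correct characteristic class and making these two conditions simultaneously compatible is the heart of the argument.
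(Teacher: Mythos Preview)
Your plan has the right skeleton---build a variety on which a period-$p$ Brauer class admits degree-$p$ Azumaya representatives over each codimension-$1$ local ring but not globally, with the obstruction living in degree $2p+2$---but three points are either wrong or miss the paper's actual mechanism.

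First, the affine construction you propose is incorrect: removing from a smooth projective $\bar X$ a complete intersection $Z$ of codimension $\geq 2$ does \emph{not} make $\bar X\setminus Z$ affine. The paper instead takes a Totaro-style linear approximation $(V-S)/\P(p,pq)$ to the classifying space $\BP(p,pq)=\B(\SL_{pq}/\mu_p)$, applies Jouanolou's device to obtain a homotopy-equivalent smooth affine, and then uses the affine Lefschetz theorem to cut the dimension down to $2p+2$.

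Second, the heart of the argument is not ``selecting the correct characteristic class'' in $\Hoh^{2p+2}(\BPGL_p;\ZZp)$. The paper's $X$ approximates $\BP(p,pq)$ for $q>1$ prime to $p$, so it carries a tautological degree-$pq$ Azumaya algebra of exponent $p$. If a degree-$p$ Azumaya algebra existed on $X$ with the same Brauer class, one would obtain a self-map of $\tau_{\leq 2p+1}\BPGL_p(\CC)$ (via block sum $\BPGL_p\to\BP(p,pq)\simeq X\to\BPGL_p$) inducing an isomorphism on $\pi_2$. The main topological theorem (Theorem~\ref{thm:p}) shows any such self-map must also induce an isomorphism on $\pi_{2p+1}\iso\ZZ/p$; but this is impossible since the composite factors through $\tau_{\leq 2p+1}\BP(p,pq)$, which has $\pi_{2p+1}=0$ because $pq>p$. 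Your plan never introduces $\BP(p,pq)$ or the self-map rigidity, and without them there is no concrete obstruction to point to.

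Third, the extension of the $\PGL_p$-torsor to each $\Spec\Oscr_{X,x}$ for $x\in X^{(1)}$ is not a Postnikov or ``lower cohomological dimension'' argument. It is purely algebraic: since $\ind(\alpha_K)=p$ (exponent $p$ and index dividing $pq$ force this), $\alpha_K$ is represented by a degree-$p$ division algebra $D$, and over any discrete valuation ring in which $\alpha$ is unramified a maximal order in $D$ is automatically Azumaya (Lemma~\ref{lem:dvr}). This step is short and requires no topology.
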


We outline the proof. Recall first that $\PGL_p$-torsors correspond to degree-$p$ Azumaya
algebras, and write $\Br_{\topo}(X(\CC))$ for the
topological Brauer group, which classifies topological Azumaya algebras up to Brauer equivalence~\cite{grothendieck-brauer-1}. Let $X$ be a
smooth complex variety such that $\Hoh^2(X(\CC),\ZZ)=0$. In this case, by~\cite{aw3}*{Lemma 6.3}, there is an isomorphism
$\Br(X)\iso\Br_{\topo}(X(\CC))=\Hoh^3(X(\CC),\ZZ)_{\tors}$. Because $\Hoh^2(X(\CC),\ZZ)=0$,
topological Azumaya algebras of degree $n$ and exponent $m$
on $X(\CC)$ are classified by homotopy classes of maps $X(\CC)\rightarrow\BP(m,n)$, where $\P(m,n)=\SL_n(\CC)/\mu_m$.

In order to prove the theorem, we must construct a complex affine variety, $X$. First, following Totaro~\cite{totaro},
we take a high dimensional algebraic approximation, $X$, to the classifying space $\BP(p,pq)$, where $q>1$ is prime to
$p$. This $X$ is equipped with an $\SL_{pq}/\mu_p$-torsor, which induces a $\PGL_{pq}$-torsor and therefore an Azumaya
algebra $A$. Let $\alpha$ be the Brauer class of $A$ on $X$. The exponent of $\alpha$ is $p$. Comparing the $p$-local
homotopy type of $\BP(p,pq)$ to that of $\BPGL_p(\CC)$, we find that there is a non-vanishing topological obstruction in
$\Hoh^{2p+2}(X(\CC),\ZZ/p)$ to the existence of a degree-$p$ Azumaya algebra on $X$ with the same Brauer class as
$A$. Second, we replace $X$ by a homotopy-equivalent smooth affine variety using Jouanolou's
device~\cite{jouanolou}. Third, we use the affine Lefschetz theorem~\cite{goresky-macpherson}*{Introduction, Section
  2.2} to cut down to a smooth affine $2p+2$-dimensional variety where the obstruction in $\Hoh^{2p+2}(X(\CC),\ZZ/p)$
persists. By using an unpublished preprint of Ekedahl~\cite{ekedahl}, it is possible to construct smooth projective
complex examples of this nature as well, although we will not emphasize this last point in our paper.

Let $K$ be the function field of the $2p+2$--dimensional affine variety $X$ alluded to in
the previous paragraph. The theorem is deduced from
the properties of $X$ as follows. The Brauer class $\alpha_K\in\Br(K)$ has exponent $p$ and
index dividing $pq$. Its index is therefore $p$ by a result of
Brauer~\cite{gille-szamuely}*{Proposition 4.5.13},
and it is represented by a division algebra $D$ of degree $p$ over $K$. If $P\in X^{(1)}$,
then $\alpha$ restricts to a class $\alpha_P\in\Br(\Oscr_{X,p})$. Since $D$ is unramified
along $\Oscr_{X,P}$ and since $\Oscr_{X,P}$ is a discrete valuation ring, it follows that any
maximal order in $D$ over $\Oscr_{X,P}$ is in fact an Azumaya algebra (see the proof
of~\cite{auslander-goldman}*{Proposition 7.4}).
Thus, the class of $D$ is in the target of the map of~\eqref{eq:purity}, but by our choice of $X$,
the class of $D$ is not in the source of that map.

We make the following conjecture.
\begin{conjecture}\label{conj:failure}
    Let $G$ be a non-special semisimple $k$-group scheme. Then there exists a smooth
    affine $k$-variety $X$ such that purity fails for $\Hoh^1_{\et}(X,G)$.
\end{conjecture}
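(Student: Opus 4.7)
The plan is to generalize the $\PGL_p$ argument above to all non-special semisimple $k$-groups, using the topology of $\Brm G(\CC)$ as the organizing principle. First I would reduce to the almost-simple case: if $G = G_1 \times G_2$ and $G_1$ is non-special, pulling back a counterexample from $G_1$ along $G \to G_1$ produces one for $G$. By the Serre--Grothendieck classification of special groups, a simply connected almost-simple semisimple group is special if and only if it is of type $\mathrm{A}_n$ or $\mathrm{C}_n$. The remaining cases to handle are therefore the non-simply-connected forms of $\mathrm{A}_n$ and $\mathrm{C}_n$ together with all types $\mathrm{B}$, $\mathrm{D}$, $\mathrm{E}$, $\mathrm{F}$, and $\mathrm{G}$.

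For each such $G$, I would seek a universal analogue of $\P(p,pq)$. Specifically, choose a prime $p$ dividing $|Z(G^{\mathrm{sc}})|$ or a torsion prime of $\Hoh^*(\Brm G(\CC),\ZZ)$, together with a larger group $\tilde G$ equipped with a surjection $\tilde G \to G$ whose kernel induces a non-trivial $p$-primary obstruction; the corresponding morphism $\Brm \tilde G \to \Brm G$ plays the role of $\Brm\P(p,pq) \to \Brm\PGL_p$. Totaro approximations produce smooth complex varieties $X$ classifying such torsors up to finite codimension, and the construction places on $X$ a topological obstruction class in $\Hoh^*(X(\CC),\ZZ/p)$ to lifting along $\Brm\tilde G \to \Brm G$. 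Jouanolou's device converts $X$ to an affine variety, and affine Lefschetz cuts it down to the smallest dimension in which the obstruction class persists while the resulting torsor remains unramified along codimension-$1$ points.

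The main obstacle will be identifying the correct obstruction class for each group type and verifying both that the resulting $G$-torsor is unramified at every codimension-$1$ point of $X$ and that it fails to extend globally. For $\Spin$ and $\SO$, this is plausible at the prime $2$ by analogy with our earlier $\PGL_2$ argument, using classical computations of the mod-$2$ cohomology of $\Brm\Spin(\CC)$ due to Borel and Quillen. Exceptional groups are substantially harder and likely require case-by-case analysis based, for example, on the $3$-torsion in $\Hoh^*(\Brm E_8(\CC),\ZZ)$. A cleaner uniform treatment, if it exists, would presumably proceed by identifying a universal, perhaps chromatically organized, obstruction-theoretic mechanism that distinguishes algebraic from topological realizations of $G$-torsors for every non-special $G$; the existence of such a mechanism in the generality of the conjecture remains speculative, and this is why we state Conjecture~\ref{conj:failure} rather than prove it.
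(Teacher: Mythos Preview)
The statement in question is a \emph{conjecture} in the paper, not a theorem; the paper offers no proof of it. The paper proves only the special case $G=\PGL_p$ over $\CC$ (and hence over any characteristic-$0$ field), and in Section~\ref{sec:canonical} gives topological evidence for the $\PGL_n$ case via the nonexistence of a canonical factorization $\BP(m,n)\to\BPGL_m(\CC)\to K(\ZZ/m,2)$. Your text is likewise not a proof but an outline of a possible program, and you explicitly acknowledge this in your final sentence. In that sense your treatment and the paper's are aligned: neither claims to settle the conjecture.

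A few comments on the outline itself. Your reduction ``if $G=G_1\times G_2$ with $G_1$ non-special, pull back a counterexample along $G\to G_1$'' is fine for direct products, but semisimple groups are in general only isogenous to products of almost-simple groups, so the reduction to the almost-simple case requires more care with central quotients. Your classification of special simply-connected groups is essentially correct, though note the low-rank coincidences ($\Spin_3\cong\SL_2$, $\Spin_5\cong\Sp_4$, $\Spin_6\cong\SL_4$) mean types $\mathrm{B}$ and $\mathrm{D}$ only become non-special beyond a certain rank. Finally, the paper's evidence for general $\PGL_n$ (Theorem~\ref{thm:fantasytheorem}) rests on the Jackowski--McClure--Oliver classification of self-maps of $\mathrm{BG}$, which applies to the full classifying space but not directly to its finite-dimensional algebraic approximations; your outline correctly identifies this gap between the topological heuristic and an actual algebraic counterexample as the central difficulty.
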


Our theorem proves the conjecture for $G=\PGL_p$ over $\CC$, and since the schemes in question may all be defined over $\QQ$, the conjecture
is actually settled for $\PGL_p$ over any field of characteristic $0$.

We explore three other points in the paper. First, in Section~\ref{sec:ojanguren} we show
that, in contrast to the global case, purity holds for $\PGL_n$-torsors over regular
noetherian integral semi-local rings $R$, at least if we restrict our attention to those torsors
whose Brauer class has exponent invertible in $R$. This is a generalization of equivalent
results of Ojanguren~\cite{ojanguren} and Panin~\cite{panin-purity} in characteristic $0$.

Second, in Section~\ref{sec:canonical}, we give a topological perspective that explains why we expect purity to fail for
$\Hoh^1_{\et}(X,\PGL_n)$ for all $n$.

Third, in Section~\ref{sec:witt}, we give examples where purity fails for $\I^2(X)/\I^3(X)$ where
$\I^\bullet$ is the filtration on the Witt group induced by the cohomological filtration on
$\W(\CC(X))$ and $X$ is a certain smooth affine complex
$5$-fold. Previous examples of a different, arithmetic nature were produced by Parimala and Sridharan~\cite{parimala-sridharan-2}, but these
were explained by Auel~\cite{auel} as failing to take into account quadratic modules with coefficients in line bundles. Our examples have
$\Pic(X)=0$.

The authors would like to thank Asher Auel, Eric Brussel, Roman Fedorov, Henri Gillet, Max
Lieblich, and Manuel
Ojanguren for conversations related to the present work. We would also like to thank Burt
Totaro for several useful comments on an early version of this paper.

\section{Topology}\label{sec:topology}

In~\cite{aw4}, we used knowledge of both the low-degree singular cohomology of $\BPGL_2(\CC)$ and of the low-degree Postnikov tower of
$\BPGL_2(\CC)$ to produce counterexamples to the existence of Azumaya maximal orders in unramified division algebras.
This is equivalent to showing that purity fails for $\PGL_2$ over $\CC$. At the time we wrote~\cite{aw4}, we did not know
how to extend our results to other primes, because our argument relied on the accessibility of the low-degree Postnikov tower of
$\BPGL_2(\CC)$. While remarkable calculations have been made by Vezzosi~\cite{vezzosi} and Vistoli~\cite{vistoli} on the cohomology of
$\BPGL_p(\CC)$ for odd primes $p$, the problem of determining the Postnikov tower up the necessary level, $2p+1$, was beyond us. By using a
$p$-local version of our arguments in~\cite{aw4} we bypass our ignorance to prove similar results.

We prove a result in this section about self-maps of $\tau_{\leq 2p+1}\BPGL_p(\CC)$, the $2p+1$ stage in the Postnikov tower of
$\BPGL_p(\CC)$.  Our theorem is in some sense related to the important results of Jackowski, McClure, and
Oliver~\cite{jackowski-mcclure-oliver-1} about maps $\mathrm{BG}\rightarrow\mathrm{BH}$ when $\mathrm{G}$ and $\mathrm{H}$ are compact Lie
groups, and especially about self-maps of $\mathrm{BG}$. For the applications to algebraic geometry we have in mind, one must use finite
approximations to $\BPGL_p(\CC)\we\mathrm{BPU}_p$, where the results of~\cite{jackowski-mcclure-oliver-1} do not immediately apply. For more on the
relationship of our work to~\cite{jackowski-mcclure-oliver-1}, see Section~\ref{sec:canonical}.

The group $\PGL_p(\CC)$ and other classical groups are always equipped with the classical
topology.

\subsection{The \texorpdfstring{$p$}{p}-local cohomology of some Eilenberg-MacLane spaces}

We fix a prime number $p$. The $p$-local cohomology of a space $X$ is the singular cohomology of $X$ with coefficients in $\ZZp$. In the
next few lemmas, we compute the low-degree $p$-local cohomology of $K(\ZZ,n)$, up to the first $p$-torsion. These results are both
straightforward and classical, being corollaries of the all-encompassing calculations of
Cartan~\cite{cartan} for instance. We include proofs here for the sake of completeness.

Recall that $K(\ZZ,2)\we\CC\PP^\infty$ and that $\Hoh^*(K(\ZZ,2),\ZZ)\iso \ZZ[\iota_2]$, where
$\deg(\iota_2)=2$. In general, there is a canonical class $\iota_n\in\Hoh^n(K(\ZZ,n),\ZZ)$
representing the identity map. We will use the Serre spectral sequences for the fiber sequences
$K(\ZZ,n)\rightarrow\ast\rightarrow K(\ZZ,n+1)$ as well as the multiplicative structure in the spectral
sequences.

\begin{lemma}
    For $1\leq k\leq 2p+4$, the $p$-local cohomology of $K(\ZZ,3)$ is
    \begin{equation*}
        \Hoh^k(K(\ZZ,3),\ZZp)\iso\begin{cases}
            \ZZp   &   \text{if $k=3$,}\\
            \ZZ/p       &   \text{if $k=2p+2$,}\\
            0           &   \text{otherwise.}
        \end{cases}
    \end{equation*}
    \begin{proof}
        We can choose $\iota_3$ so that $d_3(\iota_2)=\iota_3$ in the Serre spectral
        sequence for $K(\ZZ,2)\rightarrow\ast\rightarrow K(\ZZ,3)$:
        \begin{equation*}
            \Eoh_2^{s,t}=\Hoh^s(K(\ZZ,3),\Hoh^t(K(\ZZ,2),\ZZp))\Rightarrow\Hoh^{s+t}(\ast,\ZZ_{(p)}).
        \end{equation*}
        Then,
        $d_3(\iota_2^n)=n\iota_2^{n-1}\iota_3$ and it follows that $d_3(\iota_2^n)$ is a
        generator of $\Eoh_3^{3,2n-2}$ for $1\leq n<p$. For $4\leq k\leq
        2p+1$ the cohomology group $\Hoh^k(K(\ZZ,3),\ZZp)$ vanishes since there are no
        possible non-zero differentials hitting it. The first point on the $t$-axis
        where $d_3$ is not surjective is $d_3:\Eoh_3^{0,2p}\rightarrow\Eoh_3^{3,2p-2}$ where the cokernel is $\ZZ/p$, see
        Figure~\ref{fig:ssskz3}.
        \begin{figure}[h]
            \centering
            \begin{equation*}
                \xymatrix@R=3pt{
                    \ZZp\cdot\iota_2^{p+1}\ar[ddrrr]^{d_3}  &   0   &   0   &
                    \ZZp\cdot\iota_2^{p+1}\iota_3\\
                    0&0&0&0\\
                    \ZZp\cdot\iota_2^p\ar[ddrrr]^{d_3}&0&0&\ZZp\cdot\iota_2^{p}\iota_3\\
                    0&0&0&0\\
                    \ZZp\cdot\iota_2^{p-1}&0&0&\ZZp\cdot\iota_2^{p-1}\iota_3\\
                    0&0&0&0\\
                    \vdots&\vdots&\vdots&\vdots\\
                    0&0&0&0\\
                    \ZZp\cdot\iota_2\ar[ddrrr]^{d_3}&0&0&\ZZp\cdot\iota_2\iota_3\\
                    0&0&0&0\\
                    \ZZp&0&0&\ZZp\cdot\iota_3\\
                }
            \end{equation*}
            \caption{The $\Eoh_3$-page of the Serre spectral sequence associated to $K(\ZZ,2)\rightarrow\ast\rightarrow K(\ZZ,3)$.}
            \label{fig:ssskz3}
        \end{figure}
        In order for the sequence to converge to zero, this non-zero cokernel must support a
        non-zero departing differential; since $\Hoh^k(K(\ZZ,2),\ZZp)=0$ for $4\leq k\leq 2p+1$, the
        differential $d_{2p-1}$ induces an isomorphism $\ZZ/p\rightarrow\Hoh^{2p+2}(K(\ZZ,3),\ZZp)$.
        Let $j_p$ be a generator of $\Hoh^{2p+2}(K(\ZZ,3),\ZZp)$.
        In terms of total degree, the next non-zero term in the spectral sequence is
        $\Eoh_3^{2,2p+2}=\ZZ/p\cdot\iota_2j_p$. Thus, the next potentially non-zero
        $p$-local cohomology group of $K(\ZZ,3)$ is $\Hoh^{2p+5}(K(\ZZ,3),\ZZp)$.
    \end{proof}
\end{lemma}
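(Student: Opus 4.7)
The plan is to run the Serre spectral sequence of the path-loop fibration
$K(\ZZ,2)\to \ast\to K(\ZZ,3)$ with $\ZZp$ coefficients,
$$E_2^{p,q}=\Hoh^p(K(\ZZ,3),\Hoh^q(K(\ZZ,2),\ZZp))\Rightarrow\Hoh^{p+q}(\ast,\ZZp),$$
exploiting the facts that $\Hoh^*(K(\ZZ,2),\ZZp)=\ZZp[\iota_2]$ is concentrated in even total degree and that the abutment is concentrated in degree $0$. After fixing $\iota_3\in\Hoh^3(K(\ZZ,3),\ZZp)$ so that the transgression sends $\iota_2\mapsto\iota_3$, multiplicativity of the spectral sequence forces $d_3(\iota_2^n)=n\iota_2^{n-1}\iota_3$ in $E_3^{3,2n-2}$. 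Since $n\in\ZZp^\times$ for $1\leq n<p$, the differential $d_3\colon E_3^{0,2n}\to E_3^{3,2n-2}$ is an isomorphism in that range.

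Next I would argue the vanishing statement for $4\leq k\leq 2p+1$ by induction on $k$. Assuming $\Hoh^j(K(\ZZ,3),\ZZp)=0$ for $4\leq j<k$, everything sitting in a column of index between $4$ and $k-1$ already vanishes on $E_2$, so the only possible incoming differentials into $E_r^{k,0}$ come from the columns $0$ and $3$. The column $0$ contribution is killed by the $d_3$'s computed in the first paragraph, and the column $3$ contribution is of the form $\iota_3\cdot\iota_2^m$, which is already accounted for by multiplicativity. Since no non-trivial differential can arrive at $E_r^{k,0}$, any surviving class would contradict convergence to $0$; hence $\Hoh^k(K(\ZZ,3),\ZZp)=0$ in this range.

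For the class in degree $2p+2$, observe that $d_3(\iota_2^p)=p\iota_2^{p-1}\iota_3$ leaves a cokernel isomorphic to $\ZZ/p$ sitting at $E_4^{3,2p-2}$. This class cannot be hit by any later differential (the source would lie in a negative column) and cannot support a $d_r$ for $r$ even (the target would have odd fibre degree, where $\Hoh^*(K(\ZZ,2),\ZZp)$ vanishes). The vanishing result of the previous paragraph further kills all odd $r$ with $5\leq r\leq 2p-3$, because the would-be target column $3+r$ has trivial cohomology. Thus the only available departing differential is $d_{2p-1}\colon E_{2p-1}^{3,2p-2}\to E_{2p-1}^{2p+2,0}$, and convergence to $0$ forces it to be an isomorphism $\ZZ/p\riso\Hoh^{2p+2}(K(\ZZ,3),\ZZp)$, giving the desired generator $j_p$.

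Finally, to finish the range $k\leq 2p+4$ I would track the next few entries. In total degree $2p+3$ and $2p+4$ the only terms that could contribute are $\iota_2\cdot j_p$ at $E_3^{2p+2,2}$ (total degree $2p+4$) and products of $\iota_3$ with classes ruled out by the inductive vanishing; a quick inspection of adjacent columns shows nothing survives in $\Hoh^{2p+3}$ or $\Hoh^{2p+4}(K(\ZZ,3),\ZZp)$ within the stated range. The main obstacle in this outline is the bookkeeping that certifies no exotic differential (e.g.\ a long $d_r$ with $r$ odd) sneaks in to produce an unwanted class in the intermediate range; handling this amounts to using multiplicativity to reduce every possible source to a product of $\iota_2^n$ and already-identified classes on $K(\ZZ,3)$.
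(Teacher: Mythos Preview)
Your proposal is correct and follows essentially the same approach as the paper: both run the Serre spectral sequence for the path--loop fibration $K(\ZZ,2)\to\ast\to K(\ZZ,3)$, use multiplicativity to compute $d_3(\iota_2^n)=n\iota_2^{n-1}\iota_3$, deduce the vanishing range from the fact that $n$ is a $p$-local unit for $n<p$, identify the $\ZZ/p$ cokernel at $E_4^{3,2p-2}$, and force $d_{2p-1}$ to be an isomorphism by convergence. Your write-up is somewhat more explicit about the induction and the parity reasons why intermediate differentials vanish, whereas the paper dispatches these points more tersely, but the substance is the same.
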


The next two lemmas have proofs conceptually similar to the preceding proof.

\begin{lemma}\label{lem:kz4}
    For $0\leq k\leq 2p+5$, the $p$-local cohomology of $K(\ZZ,4)$ is
    \begin{equation*}
        \Hoh^k(K(\ZZ,4),\ZZp)\iso\begin{cases}
            \ZZp    &   \text{if $k=0\,\mathrm{mod}\,4$,}\\
            \ZZ/p   &   \text{if $k=2p+3$,}\\
            0       &   \text{otherwise.}
        \end{cases}
    \end{equation*}
    \begin{proof}
        Again, we may assume that $d_3(\iota_3)=\iota_4$ in the Serre spectral sequence.
        Then, $d_3(\iota_3\iota_4^n)=\iota_4^{n+1}$. Moreover, the powers of $\iota_4$ are
        non-zero because the $d_3$ differential leaving $\Eoh_3^{4n,3}=\ZZp\cdot\iota_3\iota_4^n$ cannot have a kernel as all
        cohomology of $K(\ZZ,3)$ in degrees higher than $3$ is torsion. For this reason the group
        $\Hoh^{2p+2}(K(\ZZ,3),\ZZp)$ survives to the
        $\Eoh_{2p+3}$-page of the spectral sequence, and the differential
        $$d_{2p+3}:\ZZ/p\iso\Hoh^{2p+2}(K(\ZZ,3),\ZZp)\rightarrow\Hoh^{2p+3}(K(\ZZ,4),\ZZp)$$
        is an isomorphism. The next potential non-zero torsion class in the spectral
        sequence is in $\Hoh^{2p+5}(K(\ZZ,3),\ZZp)$, which shows that the other cohomology
        groups vanish in the range indicated.
    \end{proof}
\end{lemma}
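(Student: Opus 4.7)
The plan is to apply the Serre spectral sequence
\begin{equation*}
    \Eoh_2^{s,t} = \Hoh^s(K(\ZZ,4), \Hoh^t(K(\ZZ,3),\ZZp)) \Longrightarrow \Hoh^{s+t}(\ast, \ZZp)
\end{equation*}
for the path-loop fibration $K(\ZZ,3) \to \ast \to K(\ZZ,4)$ and to read off $\Hoh^*(K(\ZZ,4),\ZZp)$ from the multiplicative structure together with convergence to a point. By the previous lemma, the only nonzero fiber rows in the range $0 \leq t \leq 2p+4$ are $t=0$ and $t=3$ (each $\ZZp$, generated by $1$ and $\iota_3$) and $t = 2p+2$ (equal to $\ZZ/p$, generated by $j_p$).

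I normalize so that $d_4(\iota_3) = \iota_4 \in \Hoh^4(K(\ZZ,4),\ZZp)$; the Leibniz rule then gives $d_4(\iota_3 \cdot \iota_4^n) = \iota_4^{n+1}$. An induction on $s$ establishes, for $s < 2p+3$, that $\Hoh^s(K(\ZZ,4),\ZZp)$ equals $\ZZp \cdot \iota_4^{s/4}$ when $4 \mid s$ and vanishes otherwise. The point is that in this range the only incoming differential at $(s, 0)$ is the $d_4$ from $(s-4, 3) = \iota_3 \cdot \Hoh^{s-4}(K(\ZZ,4),\ZZp)$, so positions with $s \not\equiv 0 \pmod 4$ must vanish by convergence, while for $4\mid s$ the map $d_4$ is forced to be an isomorphism onto $\ZZp \cdot \iota_4^{s/4}$, since both source and target must be killed on $\Eoh_\infty$.

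The remaining task is to chase the torsion class $j_p \in \Eoh_2^{0, 2p+2}$. For $2 \leq r < 2p+3$, the target $\Eoh_r^{r, 2p+3-r}$ of $d_r(j_p)$ has $\Eoh_2$-term nonzero only when $2p+3-r \in \{3, 2p+2\}$, i.e.\ only when $r = 2p$ (the value $r=1$ is excluded); but at $r = 2p$ the target $\iota_3 \cdot \Hoh^{2p}(K(\ZZ,4),\ZZp)$ is torsion-free (zero for odd $p$ by the first step, and equal to $\iota_3 \iota_4 \cdot \ZZp$ for $p=2$), so any map from $\ZZ/p \cdot j_p$ is zero. Hence $j_p$ survives to the $\Eoh_{2p+3}$-page, and since the total space is contractible it must transgress nontrivially. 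The only competing incoming differential at $(2p+3, 0)$ is the $d_4$ from $(2p-1, 3) = \iota_3 \cdot \Hoh^{2p-1} = 0$, so by convergence $d_{2p+3}: \ZZ/p \cdot j_p \rightarrow \Hoh^{2p+3}(K(\ZZ,4),\ZZp)$ is an isomorphism. The two remaining degrees $s = 2p+4, 2p+5$ are handled by the same bookkeeping: the only possible incoming differentials are the $d_4$ from $(s-4, 3)$ and a $d_{2p+3}$ from $(s-2p-3, 2p+2)$, both of which vanish except in the case $(s,p) = (8,2)$, where $d_4(\iota_3 \iota_4) = \iota_4^2$ forces $\Hoh^8 = \ZZp \cdot \iota_4^2$; in all other subcases nothing can hit $(s,0)$, forcing $\Hoh^s = 0$.

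The main obstacle is the early-differential analysis for $j_p$: one must rule out a transgression on pages $\Eoh_r$ for $2 \leq r < 2p+3$. This ultimately reduces to the torsion-freeness of $\Hoh^{2p}(K(\ZZ,4),\ZZp)$, which is supplied uniformly (for both odd $p$ and $p=2$) by the first inductive step.
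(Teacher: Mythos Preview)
Your proof is correct and follows the same approach as the paper: the Serre spectral sequence for the path--loop fibration $K(\ZZ,3)\to\ast\to K(\ZZ,4)$, with the transgression $d_4(\iota_3)=\iota_4$ producing the polynomial part and the transgression $d_{2p+3}(j_p)$ producing the lone torsion class. Your write-up is in fact more careful than the paper's---you use the correct differential $d_4$ (the paper writes $d_3$, a typo), you explicitly rule out earlier differentials on $j_p$ by noting the torsion-freeness of the row-$3$ target, and you handle the borderline case $p=2$ separately where $2p$ is divisible by $4$.
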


\begin{lemma}\label{lem:kzn}
    The cohomology groups $\Hoh^k(K(\ZZ,n),\ZZp)$ are torsion-free for $0\leq k\leq 2p+3$
    and $n\geq 5$. In this range they are isomorphic to a polynomial algebra over
    $\ZZp$ with a single generator $\iota_n$ in degree $n$ if $n$ is even or an exterior algebra over
    $\ZZp$ with a single generator $\iota_n$ in degree $n$ if $n$ is odd.
    \begin{proof}
        This follows inductively as in the previous two lemmas. The important point is that the first
        $p$-torsion in $\Hoh^*(K(\ZZ,n-1),\ZZp)$ is in degree $2p+(n-1)-1$. No differential
        exiting it can be non-zero until the differential $d_{2p+(n-1)}$, which produces
        $p$-torsion in $\Hoh^{2p+n-1}(K(\ZZ,n),\ZZp)$. If $n\geq 5$, then $2p+n-1\geq 2p+4$.
    \end{proof}
\end{lemma}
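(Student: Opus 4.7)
The plan is to induct on $n \geq 4$, with Lemma \ref{lem:kz4} as the base case. The inductive claim I would carry through is the slightly stronger statement that for all $n \geq 4$, $\Hoh^k(K(\ZZ,n),\ZZp)$ is torsion-free in the range $0 \leq k \leq 2p+n-2$, in which range it agrees with the polynomial algebra $\ZZp[\iota_n]$ if $n$ is even or the exterior algebra $\Lambda_{\ZZp}(\iota_n)$ if $n$ is odd, and furthermore that the first $p$-torsion class is a copy of $\ZZ/p$ appearing in degree $2p+n-1$. The lemma as stated falls out of this strengthened claim by restricting to $n \geq 5$ and $k \leq 2p+3$.

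For the inductive step, suppose the strengthened claim is known through stage $n-1$ with $n \geq 5$, and consider the Serre spectral sequence of the path--loop fibration $K(\ZZ,n-1) \to * \to K(\ZZ,n)$,
\begin{equation*}
    \Eoh_2^{r,s} = \Hoh^r(K(\ZZ,n),\Hoh^s(K(\ZZ,n-1),\ZZp)) \Rightarrow \Hoh^{r+s}(*,\ZZp).
\end{equation*}
The transgression sends $\iota_{n-1}$ to $\iota_n$, and exactly as in the two preceding lemmas the multiplicative structure propagates this through the powers or exterior products of $\iota_{n-1}$ in the fiber. The binomial coefficients $\binom{k}{j}$ entering the transgression formula on $\iota_{n-1}^k$ are $p$-local units for $1 \leq k < p$, so these differentials are surjective on the relevant $\ZZp$-summands of the fiber, and the only new classes they create on the base lie in the free subalgebra generated by $\iota_n$. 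In particular, the base cohomology is torsion-free with the asserted structure in degrees $\leq 2p+n-2$.

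It remains to locate the first $p$-torsion class on the base. The torsion $\ZZ/p$ class on the fiber in degree $2p+n-2$ sits on the $\Eoh_2$-page at bidegree $(0, 2p+n-2)$. Because each column to its left has already been cancelled by the free-part transgression argument, the first differential that can be nonzero on it is $d_{2p+n-1}\colon \Eoh_{2p+n-1}^{0, 2p+n-2} \to \Eoh_{2p+n-1}^{2p+n-1, 0}$, which must be an isomorphism for convergence to the cohomology of a point. This creates the required $\ZZ/p$ in $\Hoh^{2p+n-1}(K(\ZZ,n),\ZZp)$, and since $2p+n-1 \geq 2p+4$ whenever $n \geq 5$, the entire range $k \leq 2p+3$ remains torsion-free. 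The main obstacle is the bookkeeping required to verify that the multiplicative interactions in the spectral sequence create no unexpected torsion in the intermediate range, but this is handled exactly as in the two preceding proofs.
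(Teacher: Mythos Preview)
Your proof is correct and follows essentially the same approach as the paper: induction on $n$ with Lemma~\ref{lem:kz4} as base, using the Serre spectral sequence of the path--loop fibration, tracking that the first $p$-torsion on the fiber sits in degree $2p+(n-1)-1$ and transgreses via $d_{2p+n-1}$ to produce the first $p$-torsion on the base in degree $2p+n-1\geq 2p+4$. The paper states only this skeleton and defers the multiplicative bookkeeping to the phrase ``as in the previous two lemmas''; you have made that bookkeeping explicit, including the strengthened inductive hypothesis and the observation that the relevant Leibniz-rule coefficients are $p$-local units in the range under consideration.
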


\subsection{The \texorpdfstring{$p$}{p}-local homotopy type of \texorpdfstring{$\BSL_p(\CC)$}{BSLpC}}

Now we harness the computations of the previous section to study the $p$-local homotopy type
of truncations of $\BPGL_p(\CC)$. If $X$ is a connected topological space, we will write
$\tau_{\leq n}X$ for the $n$th stage in the Postnikov tower of a path-connected space $X$. Thus, $\tau_{\leq n}X$
is a topological space such that
\begin{equation*}
    \pi_i\left(\tau_{\leq n}X\right)\iso\begin{cases}
        \pi_i(X) &   \text{if $i\leq n$,}\\
        0       &   \text{otherwise.}
    \end{cases}
\end{equation*}
The Postnikov tower is the sequence of natural maps
\begin{equation*}
    \xymatrix@R=10pt{
         X \ar[rd] \ar[rdd] \ar@/_5pt/[rdddd] \ar@/_20pt/[rddddd]  &   \vdots\ar[d]\\
           &   \tau_{\leq n}X\ar[d]\\
           &   \tau_{\leq n-1}X\ar[d]\\
           &   \vdots\ar[d]\\
           &   \tau_{\leq 1}X\ar[d]\\
           &   \tau_{\leq 0}X = \ast
    }
\end{equation*}
with the fiber of $\tau_{\leq n}X\rightarrow\tau_{\leq n-1}X$ identified with $K(\pi_nX,n)$.  In good cases, such as when the action of
$\pi_1X$ on $\pi_nX$ for $n\ge 1$ is trivial, the extension
\begin{equation*}
   \xymatrix{ K(\pi_nX,n)\ar[r] & \tau_{\leq n}X \ar[d] \\ & \tau_{\leq n-1}X }
\end{equation*}
is classified by the $k$-invariant 
\[k_{n-1}:\tau_{\leq n-1}X\rightarrow K(\pi_nX,n+1)\]
 in the sense that $\tau_{\leq n}X$ is the homotopy fiber of $k_{n-1}$. This $k$-invariant is a
cohomology class in $\Hoh^{n+1}(\tau_{\leq n-1}X,\pi_nX)$. When it vanishes, the fibration is trivial.

There is a $p$-localization functor $\Lp$ that takes a topological space $X$ and produces a space $\Lp X$ whose homotopy groups are
$\ZZp$-modules. For the theory of localization of CW complexes, we refer to the monograph of Bousfield and Kan~\cite{bousfield-kan}. This
functor takes fiber sequences to fiber sequences when the base is simply connected by the principal fibration
lemma~\cite{bousfield-kan}*{Chapter II}. Since the $\ZZp$-localization of an Eilenberg-MacLane space $K(\pi,n)$ is
$K(\pi\otimes_\ZZ\ZZp,n)$, for which see~\cite{bousfield-kan}*{page 65},
it follows that application of $\Lp$ commutes with the formation of Postnikov towers of simply-connected spaces.  

Now, we consider the $p$-local homotopy type of certain stages in the Postnikov tower of
$\BSL_n(\CC)$. By Bott periodicity~\cite{bott}*{Theorem 5} the $p$-local homotopy groups of $\BSL_n(\CC)$
for $1\leq i\leq 2n+1$ are
\begin{equation*}
    \pi_i\left(\Lp\BSL_n(\CC)\right)\iso\pi_i\left(\BSL_n(\CC)\right)\otimes_\ZZ\ZZp\iso\begin{cases}
        \ZZp    &   \text{if $i$ is even and $i\geq 4$,}\\
        \ZZ/(n!)\otimes_\ZZ\ZZp &   \text{if $i=2n+1$,}\\
        0   &   \text{otherwise.}
    \end{cases}
\end{equation*}

\begin{proposition} \label{p:SLsplit}
    The localization $\Lp\tau_{\leq 2p}\BSL_n(\CC)$, where $n\geq p$, is a generalized Eilenberg--MacLane space:
    \begin{equation} \label{eq:GEMS}
        \Lp\tau_{\leq 2p}\BSL_n(\CC)\we K(\ZZp,4)\times K(\ZZp,6)\times\cdots\times K(\ZZp,2p).
    \end{equation}
    \begin{proof}
        We prove the general statement 
  \begin{equation*} 
        \Lp\tau_{\leq 2j}\BSL_n(\CC)\we K(\ZZp,4)\times K(\ZZp,6)\times\cdots\times K(\ZZp,2j), \quad \text{ for $j \le p$}
    \end{equation*}
 by induction on $j$. The base case when $j=1$ is trivial.
        For the induction step, suppose that $\Lp\tau_{\leq
        2j}\BSL_n(\CC)$ is
        \begin{equation*}
            K(\ZZp,4)\times\cdots\times K(\ZZp,2j)
        \end{equation*}
        for some $1\leq j< p$. The extension
        \begin{equation*}
            K(\ZZp,2j+2)\rightarrow\Lp\tau_{\leq 2j+2}\BSL_n(\CC)\rightarrow\Lp\tau_{\leq 2j}\BSL_n(\CC)
        \end{equation*}
        is classified by the $k$-invariant
        \begin{equation*}
            k_{2j}\in\Hoh^{2j+3}(K(\ZZp,4)\times\cdots\times K(\ZZp,2j),\ZZp).
        \end{equation*}
        By Lemmas~\ref{lem:kz4} and~\ref{lem:kzn}, this cohomology group must vanish, since
        $j<p$. Hence $k_{2j}=0$ and the extension is trivial.
    \end{proof}
\end{proposition}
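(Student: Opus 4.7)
The plan is to prove the proposition by induction on the truncation level $j$, showing that $\Lp\tau_{\leq 2j}\BSL_n(\CC) \weq K(\ZZp,4) \times K(\ZZp,6) \times \cdots \times K(\ZZp,2j)$ for each $1 \leq j \leq p$. The case $j=1$ is trivial because, by Bott periodicity, the first nonzero $p$-local homotopy group of $\BSL_n(\CC)$ lies in degree $4$, so $\Lp\tau_{\leq 2}\BSL_n(\CC)$ is contractible. The hypothesis $n \geq p$ is used here to guarantee, via Bott periodicity, that $\pi_{2i}(\Lp\BSL_n(\CC)) \iso \ZZp$ for all $2 \leq i \leq p$.

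For the inductive step, assume $\Lp\tau_{\leq 2j}\BSL_n(\CC) \weq K(\ZZp,4) \times \cdots \times K(\ZZp,2j)$ for some $1 \leq j < p$. The next Postnikov stage fits in a principal fibration
\begin{equation*}
    K(\ZZp,2j+2) \rightarrow \Lp\tau_{\leq 2j+2}\BSL_n(\CC) \rightarrow \Lp\tau_{\leq 2j}\BSL_n(\CC)
\end{equation*}
classified by a $k$-invariant $k_{2j}$ living in $\Hoh^{2j+3}(\Lp\tau_{\leq 2j}\BSL_n(\CC), \ZZp)$. The goal is to show this cohomology group vanishes, so that the fibration splits.

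To carry this out, I would apply the Künneth formula to the product $K(\ZZp,4) \times \cdots \times K(\ZZp,2j)$ and invoke Lemmas~\ref{lem:kz4} and~\ref{lem:kzn}. In the relevant degree range $k \leq 2j+3 \leq 2p+1$, each factor $\Hoh^*(K(\ZZp,2i), \ZZp)$ is torsion-free and a polynomial algebra on an even-degree generator $\iota_{2i}$, since the first $p$-torsion only appears in degree $\geq 2p+2i-1 \geq 2p+3 > 2j+3$. Consequently the whole cohomology ring in degrees $\leq 2j+3$ is generated by even-degree classes and is concentrated in even total degree. Since $2j+3$ is odd, the target group vanishes, $k_{2j} = 0$, and the fibration trivializes.

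The main obstacle is the careful degree bookkeeping in the Künneth argument: one must check that no combination of tensor factors, including the torsion classes in the individual Eilenberg-MacLane cohomologies, can contribute to the odd degree $2j+3$. This is exactly why the explicit ranges in Lemmas~\ref{lem:kz4} and~\ref{lem:kzn} were computed. Once that parity obstruction is in hand, the induction closes cleanly and yields the splitting~\eqref{eq:GEMS}.
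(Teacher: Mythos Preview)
Your proposal is correct and follows essentially the same route as the paper's proof: both argue by induction on the Postnikov level, both identify the $k$-invariant $k_{2j}\in\Hoh^{2j+3}(K(\ZZp,4)\times\cdots\times K(\ZZp,2j),\ZZp)$, and both kill it by appealing to Lemmas~\ref{lem:kz4} and~\ref{lem:kzn}. The only difference is that you spell out the K\"unneth/parity step (even-degree generators, no torsion below degree $2p+3$, odd target degree) that the paper leaves implicit.
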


Before we prove the next proposition, we need a well-known lemma.
Recall that an $n$-equivalence is a map such that $\pi_k(f):\pi_k(X)\rightarrow\pi_k(Y)$
is an isomorphism for $0\leq k <n$ and a surjection for $k=n$.

\begin{lemma} \label{lem:nequivcoho}
    Let $f:X\rightarrow Y$ be an $n$-equivalence. Then, for any coefficient abelian group $A$, the
    induced map
    \begin{equation*}
        f^*:\Hoh^k(Y,A)\rightarrow\Hoh^k(X,A)
    \end{equation*}
    is an isomorphism for $0\leq k\leq n-1$ and an injection for $k=n$.
    \begin{proof}
         This follows most easily from the Serre spectral sequence for the fibration sequence
        $F\rightarrow X\rightarrow Y$. Since the fiber is $n$-connected, the groups $\widetilde \Hoh^k(F,A)$ vanish for $k<n$. The first
        nontrivial extension-problem in the spectral sequence takes the form
        \[ 0 \to \Hoh^n(Y, A) \to \Hoh^n(X, A) \to \Hoh^n(F, A) \]
        which proves the result.
    \end{proof}
\end{lemma}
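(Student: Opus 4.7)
The plan is to replace $f$ by a (Serre) fibration without changing the induced map on cohomology, and then analyze the cohomological Serre spectral sequence of the resulting fibration. Concretely, I would factor $f$ as $X \xrightarrow{\sim} X' \to Y$ with the second arrow a fibration, and let $F$ denote its fiber. Since $X \to X'$ is a weak equivalence, the map $f^*$ on cohomology is identified with the map induced by $X' \to Y$, so I can freely replace $X$ by $X'$ and speak of the fibration $F \to X \to Y$.

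The first step is to pin down the connectivity of $F$. From the long exact sequence in homotopy and the assumption that $f$ is an $n$-equivalence (that is, $\pi_k(f)$ is an isomorphism for $k<n$ and a surjection for $k=n$), a short diagram chase shows that $\pi_k(F)=0$ for $0\le k\le n-1$, so $F$ is $(n-1)$-connected. By the Hurewicz theorem and the universal coefficient theorem, this gives $\widetilde H^k(F,A)=0$ for $0\le k\le n-1$.

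The second step is to read this off the Serre spectral sequence
\begin{equation*}
    E_2^{p,q} = H^p(Y, \widetilde H^q(F,A)) \Longrightarrow H^{p+q}(X,A),
\end{equation*}
where if $Y$ is not simply connected one uses local coefficients; this does not affect the argument, since the only row we need is $q=0$, where the local system is the constant $A$. The vanishing of $\widetilde H^q(F,A)$ for $0<q\le n-1$ makes the strip $0<q\le n-1$ of the $E_2$-page identically zero. On the diagonal $p+q=k$ with $k\le n-1$ only $E_2^{k,0}=H^k(Y,A)$ survives, and no differential can enter or leave this entry in that range, so the edge map $f^*\colon H^k(Y,A)\to H^k(X,A)$ is an isomorphism. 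For $k=n$, the only potentially nonvanishing entries on the total-degree-$n$ diagonal are $E_2^{n,0}=H^n(Y,A)$ and $E_2^{0,n}=H^n(F,A)$. Differentials out of $E_r^{n,0}$ land in the region $p\ge n+r$, which is fine, and differentials $d_r\colon E_r^{n-r,r-1}\to E_r^{n,0}$ originate in the vanishing strip $0<q\le n-1$ for $2\le r\le n$, and in the region $p<0$ for $r>n$. Thus $E_\infty^{n,0}=E_2^{n,0}$, and the edge map gives the desired injection $f^*\colon H^n(Y,A)\hookrightarrow H^n(X,A)$.

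I do not expect a real obstacle here: the argument is essentially bookkeeping with the Serre spectral sequence, with the mild subtlety that, for non-simply connected $Y$, one must justify that local coefficient issues do not intrude. This is immediate because only the bottom row contributes in the relevant range. A completely analogous proof could instead be given by CW-approximating $f$ as the inclusion of a subcomplex, applying relative Hurewicz to conclude $H^k(Y,X;A)=0$ for $k\le n$, and extracting the statement from the long exact sequence of the pair.
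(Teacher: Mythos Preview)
Your proof is correct and follows essentially the same approach as the paper's: both arguments use the Serre spectral sequence of the fibration $F \to X \to Y$, together with the $(n-1)$-connectivity of $F$, to read off the edge-map isomorphisms and the injection in degree $n$. Your version is more explicit about fibrant replacement, the local-coefficients caveat, and the tracking of differentials, but the underlying argument is the same; the alternative via relative Hurewicz that you mention at the end would be a genuinely different route, but the main argument you give is not.
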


The previous proposition asserts that $\Lp\tau_{\le 2p} \BSL_p(\CC)$ is a generalized Eilenberg--MacLane space, the following asserts that
the $\tau_{\le 2p}$ appearing there is sharp, and the nontriviality of the extension can be detected after pulling the extension back along
an inclusion $K(\ZZp, 4) \to \Lp\tau_{\le 2p} \BSL_p(\CC)$.

\begin{proposition}\label{prop:nonsplit}
    Denote by $i$ a map $i: K(\ZZp,4)\rightarrow\Lp\tau_{\leq 2p}\BSL_p(\CC)$ splitting the
    projection map.
    Write $k_{2p}\in\Hoh^{2p+2}(\Lp\tau_{\leq 2p}\BSL_p(\CC),\ZZ/p)$ for the $k$--invariant of the extension
    \begin{equation} \label{eq:nontrivext}
       \xymatrix{ K(\ZZ/p,2p+1)\ar[r] & \Lp\tau_{\leq 2p+1}\BSL_p(\CC) \ar[d] \\ & \Lp\tau_{\leq
        2p}\BSL_p(\CC).}
    \end{equation}
    Then $k_{2p}$ is of order $p$, and moreover $i^*(k_{2p})$ is a generator for $\Hoh^{2p+2}( K(\ZZp, 4) , \ZZ/p) \iso \ZZ/p$.    
\end{proposition}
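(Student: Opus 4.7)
Since $k_{2p}$ takes values in a $\ZZ/p$-vector space, being of order $p$ is equivalent to being nonzero, and the first assertion follows from the second via naturality of $i^*$. The plan is to identify $k_{2p}$ explicitly as a Steenrod-operation class and then directly compute its pullback under $i^*$.

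On $\Hoh^*(\BSU_p,\ZZ/p)=\ZZ/p[c_2,\ldots,c_p]$, the mod-$p$ Steenrod power $P^1$ acts on $c_2$ to give a class in degree $2p+2$. Using the splitting principle, the Cartan formula, and Newton's identities, together with the fact that $c_1=0$ on $\BSU_p$, one computes an explicit relation $P^1(c_2)=W(c_2,\ldots,c_p)$ with $W$ a polynomial whose $c_2^{(p+1)/2}$-coefficient is a unit mod $p$ for $p$ odd (for $p=3$, one gets $P^1(c_2)=c_2^2$). Pulled back along the Chern-class map from $\BSL_p(\CC)_{(p)}$ to $\Lp\tau_{\leq 2p}\BSL_p(\CC)\weq\prod_{j=2}^p K(\ZZp,2j)$, which sends $\iota_{2j}$ to $c_j$, the class $P^1(\iota_4)-W(\iota_4,\iota_6,\ldots,\iota_{2p})$ lies in the kernel of the pullback in degree $2p+2$. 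A Serre spectral sequence comparison between the fibration~(\ref{eq:nontrivext}) and the natural map from $\BSL_p(\CC)_{(p)}$ will identify $k_{2p}$ with a nonzero scalar multiple of this class, via the transgression $d_{2p+2}(\iota_{2p+1})=k_{2p}$ being forced to hit the relation that fails to lift.

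With $k_{2p}$ so identified, applying $i^*$ is immediate: $i^*(\iota_4)=\iota_4$ and $i^*(\iota_{2j})=0$ for $j>2$, so $i^*(k_{2p})$ is a nonzero scalar multiple of $P^1(\iota_4)-c\,\iota_4^{(p+1)/2}$ in $\Hoh^{2p+2}(K(\ZZp,4),\ZZ/p)$ for some $c\in\ZZ/p$. Since $\iota_4^{(p+1)/2}$ is the mod-$p$ reduction of an integral class and so has trivial Bockstein, while $\beta P^1(\iota_4)$ generates $\Hoh^{2p+3}(K(\ZZp,4),\ZZp)\iso\ZZ/p$ by Lemma~\ref{lem:kz4}, we conclude that $\beta(i^*(k_{2p}))$ is a generator of this $\ZZ/p$. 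This confirms both that $i^*(k_{2p})\neq 0$ (so $k_{2p}$ has order $p$) and that $i^*(k_{2p})$ is the desired generator.

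The main obstacle will be the Serre spectral sequence identification of $k_{2p}$ with the explicit class $P^1(\iota_4)-W$: one must rule out unwanted contributions to the kernel of the Chern-class pullback on $\Hoh^{2p+2}$ and verify that the transgression in the Serre spectral sequence of~(\ref{eq:nontrivext}) indeed matches this class up to a nonzero scalar. An acceptable alternative, should the precise Steenrod computation prove too delicate, is to argue the nonvanishing of $k_{2p}$ more directly: if $k_{2p}=0$ then the extension~(\ref{eq:nontrivext}) splits, producing a nonzero class in $\Hoh^{2p+1}(\Lp\tau_{\leq 2p+1}\BSL_p(\CC),\ZZ/p)$, which by Lemma~\ref{lem:nequivcoho} must inject into $\Hoh^{2p+1}(\BSL_p(\CC),\ZZ/p)=0$, a contradiction; one then refines this vanishing argument after pulling back along $i$ to extract the generator statement.
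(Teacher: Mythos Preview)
Your endgame---detecting $i^*(k_{2p})\neq 0$ via the Bockstein---is exactly right and matches the paper. But the route you propose through an explicit Steenrod identification of $k_{2p}$ as $P^1(\iota_4)-W$ is an unnecessary detour, and as you yourself concede, the Serre spectral sequence step matching $k_{2p}$ to that class is left as an ``obstacle'' rather than carried out. Your ``acceptable alternative'' in the last paragraph is in fact the paper's argument, but you stop short of executing it.

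Here is how the paper makes that alternative precise. The Postnikov extension gives an injection
\[
\Hoh^{2p+2}(\Lp\tau_{\le 2p}\BSL_p,\ZZ/p)\big/\langle k_{2p}\rangle\ \hookrightarrow\ \Hoh^{2p+2}(\Lp\BSL_p,\ZZ/p),
\]
via Lemma~\ref{lem:nequivcoho} applied to $\BSL_p\to\tau_{\le 2p+1}\BSL_p$. Now compare both sides through the Bockstein sequence for $0\to\ZZp\to\ZZp\to\ZZ/p\to 0$. With $\ZZp$-coefficients the map in degree $2p+2$ is already an isomorphism (both sides are polynomial on classes in degrees $4,6,\ldots,2p$ in this range, by Lemmas~\ref{lem:kz4} and~\ref{lem:kzn}), and since $\Hoh^*(\BSL_p,\ZZ)$ is torsion-free the target mod-$p$ group is simply the reduction of the integral one. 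On the source side, however, Lemmas~\ref{lem:kz4} and~\ref{lem:kzn} give $\Hoh^{2p+3}(\Lp\tau_{\le 2p}\BSL_p,\ZZp)\cong\ZZ/p\cdot\rho$ with $i^*(\rho)$ generating $\Hoh^{2p+3}(K(\ZZp,4),\ZZp)$, so the Bockstein sequence contributes one extra class $\sigma$ in mod-$p$ degree $2p+2$ with $\beta_p(\sigma)=\rho$. The injection above then forces $k_{2p}=u\sigma$ for a unit $u$, and naturality of $\beta_p$ gives $\beta_p(i^*(k_{2p}))=u\,i^*(\rho)\neq 0$.

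No formula for $P^1(c_2)$ is required, and the case $p=2$ (where your $c_2^{(p+1)/2}$ is undefined) needs no separate treatment. Your class $P^1(\iota_4)-W$ is of course a unit multiple of the paper's $\sigma$---both lie in the one-dimensional kernel and have nonzero Bockstein---but the paper never has to name it, compute $W$, or invoke any Steenrod relation beyond the existence of the single $p$-torsion class $\rho$ supplied by Lemma~\ref{lem:kz4}.
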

\begin{proof}
      Note that $X \to \tau_{\le n} X$ is an $(n+1)$--equivalence. By Lemma \ref{lem:nequivcoho}, the map of rings
      \[ \Hoh^{i}( \Lp \tau_{\le 2p} \BSL_p, \ZZp) \to  \Hoh^{i} ( \Lp \BSL_p, \ZZp) = \ZZp[ c_2, c_3 , \dots, c_{p}]  \]
      is an isomorphism when $i \le 2p$, and an injection, and hence an isomorphism, when $i = 2p+1$.
      By Lemmas \ref{lem:kz4} and \ref{lem:kzn}, the ring $\Hoh^i(\Lp \tau_{\le 2p} \BSL_p, \ZZp)$ is isomorphic to a polynomial ring on generators
      in degrees $4, 6, 8, \dots, 2p$ in the range where $i \le 2p+2$, so that it follows that
      \[ \Hoh^{2p+2}( \Lp \tau_{\le 2p} \BSL_p, \ZZp) \to \Hoh^{2p+2}( \Lp \BSL_p, \ZZp) \] is an isomorphism as well. We also deduce that
      \[\Hoh^{2p+3}(\Lp \tau_{\le 2p} \BSL_p(\CC), \ZZp) \iso \ZZ/p \cdot \rho\]
      where $i^*(\rho)$ is a generator of $\Hoh^{2p+3}( K(\ZZ, 4), \ZZp) \iso \ZZ/p.$

      Considering the long exact sequence in cohomology associated to the sequence
      \[ 0 \to \ZZp \to \ZZp \to \ZZ/p \to 0 \]
      we deduce the existence of a decomposition
      \[ \Hoh^{2p+2}( \Lp \tau_{\le 2p} \BSL_p, \ZZ/p) = \Hoh^{2p+2}(\Lp \BSL_p, \ZZ/p) \oplus \ZZ/p \cdot \sigma \]
      where $\beta_p(\sigma) = \rho$.
      
      We observe two things. First that there is a quotient relationship arising from the Postnikov extensions
      \[ \Hoh^{2p+2}( \Lp \tau_{\le 2p} \BSL_p, \ZZ/p) / \langle k_{2p} \rangle \iso
          \Hoh^{2p+2}(\Lp \tau_{\le 2p+1} \BSL_p, \ZZ/p), \]
      and second that the functorial map
      \[ \Hoh^{2p+2}( \Lp \tau_{\le 2p+1} \BSL_p, \ZZ/p) \hookrightarrow \Hoh^{2p+2}( \Lp \BSL_p, \ZZ/p) \]
      is injective by Lemma \ref{lem:nequivcoho}. It follows directly that $k_{2p} = u \sigma$ where $u$ is a unit. By naturality, $\beta_{2p}( i^*(k_{2p})) = u
      i^*(\rho)$, and in particular, $i^*(k_{2p}) \neq 0$.
\end{proof}

\begin{corollary} \label{l:lemma2}
    A map $h: \Lp \tau_{\leq 2p+1} \BSL_p(\CC) \to  \Lp \tau_{\leq 2p+1} \BSL_p(\CC)$ that
    induces an isomorphism on $\pi_4\left(\Lp \tau_{\leq 2p+1} \BSL_p(\CC)\right) \iso
    \ZZ_{(p)}$, also induces an isomorphism on $$\pi_{2p+1}\left(\Lp \tau_{\leq 2p+1}
    \BSL_p(\CC)\right) \iso \ZZ/p.$$
\end{corollary}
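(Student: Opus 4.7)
The plan is to compare $h$'s action on $\pi_{2p+1}$ to the Postnikov $k$-invariant $k_{2p}$ from Proposition~\ref{prop:nonsplit} via naturality. Write $E = \Lp\tau_{\leq 2p+1}\BSL_p(\CC)$ and $B = \Lp\tau_{\leq 2p}\BSL_p(\CC)$. Truncating $h$ produces a self-map $\bar h : B \to B$, which is still an isomorphism on $\pi_4$, while $h$ restricts on the Postnikov fibre $K(\ZZ/p, 2p+1)$ of $E \to B$ to multiplication by the scalar $a \in \ZZ/p$ describing its action on $\pi_{2p+1}$. The goal is to show $a$ is a unit.

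The key identity is the standard naturality relation for principal fibrations classified by Eilenberg--MacLane $k$-invariants,
\[
\bar h^*(k_{2p}) = a \cdot k_{2p}\quad\text{in }\Hoh^{2p+2}(B, \ZZ/p).
\]
I would justify this by noting that $h$ extends to a self-map of the Puppe sequence $E \to B \to K(\ZZ/p, 2p+2)$, whose restriction to the final term $K(\ZZ/p, 2p+2) \weq \B K(\ZZ/p, 2p+1)$ is the classifying map of the fibre self-action and hence multiplication by $a$. Pulling back along the inclusion $i : K(\ZZp, 4) \to B$ of Proposition~\ref{prop:nonsplit}, I obtain
\[
(\bar h \circ i)^*(k_{2p}) = a \cdot i^*(k_{2p})\quad\text{in }\Hoh^{2p+2}(K(\ZZp, 4), \ZZ/p).
\]

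I would detect nontriviality of $a$ via the Bockstein. Let $\pi : B \simeq \prod_{j=2}^p K(\ZZp, 2j) \to K(\ZZp, 4)$ denote the projection onto the first factor. Since $\bar h$ is an isomorphism on $\pi_4$, the composite $\pi \circ \bar h \circ i$ is a self-map of $K(\ZZp, 4)$ inducing multiplication by a unit on $\pi_4$, hence a homotopy equivalence, and therefore acts invertibly on all cohomology. From the proof of Proposition~\ref{prop:nonsplit}, $\beta_p(k_{2p})$ is a unit multiple of $\pi^*(\rho')$, where $\rho'$ generates $\Hoh^{2p+3}(K(\ZZp, 4), \ZZp) \cong \ZZ/p$, so $\beta_p(i^*(k_{2p}))$ is a unit multiple of $\rho'$. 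Applying $\beta_p$ and naturality to the pulled-back identity shows that $a$ times a unit multiple of $\rho'$ equals $(\pi \circ \bar h \circ i)^*$ applied to a unit multiple of $\rho'$, which is itself a unit multiple of $\rho'$ because $\pi \circ \bar h \circ i$ is a homotopy equivalence. Hence $a \neq 0$ in $\ZZ/p$, and $a$ is a unit.

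The main delicate point will be the naturality identity $\bar h^*(k_{2p}) = a \cdot k_{2p}$, which is classical Postnikov-theoretic machinery but requires verifying that the self-map of $K(\ZZ/p, 2p+2)$ induced by $h$ at the end of the extended Puppe sequence is indeed multiplication by $a$; once this is in hand, everything else reduces to the unit-multiple bookkeeping already prepared by Propositions~\ref{p:SLsplit} and~\ref{prop:nonsplit}.
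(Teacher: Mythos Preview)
Your argument is correct and follows essentially the same route as the paper: both truncate $h$ to $\bar h$ on $B$, use naturality of the $k$-invariant $\bar h^*(k_{2p}) = a\cdot k_{2p}$, pull back along the splitting $i:K(\ZZ_{(p)},4)\to B$, and exploit that $\pi\circ\bar h\circ i$ is a self-equivalence of $K(\ZZ_{(p)},4)$ to force $a$ to be a unit. Your passage through the Bockstein is in fact a more careful version of the final step, since it cleanly handles the issue that $\bar h\circ i$ need not factor through $i$ (so the left square in the paper's diagram need not literally commute); after applying $\beta_p$ everything lives in $\Hoh^{2p+3}(-,\ZZ_{(p)})$, where the class $\rho$ genuinely comes from the $K(\ZZ_{(p)},4)$ factor via $\pi^*$ and your argument goes through without ambiguity.
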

  \begin{proof}
        Let $i: K(\ZZ_{(p)}, 4) \to \Lp \tau_{\leq 2p} \BSL_p(\CC)$ again denote a map splitting the
        projection onto $K(\ZZ_{(p)}, 4)$ in Proposition \ref{p:SLsplit}.  Let
        \begin{equation*}
            \tau_{\leq 2p}h:\tau_{\leq 2p}\BSL_p(\CC)\rightarrow\tau_{\leq 2p}\BSL_p(\CC)
        \end{equation*}
        be the truncation of $h$. This map fits into a commutative diagram
        \begin{equation} \label{e:cohOp}
        \xymatrix{
            K(\ZZ_{(p)}, 4) \ar^>>>>>i[r] \ar^{\simeq}[d] & \Lp \tau_{\leq 2p} \BSL_p( \CC) \ar^{k_{2p}}[r] \ar^{\tau_{\leq 2p}h}[d]  & K(\ZZ/p, 2p+2)  \ar^{\B h_*}[d]\\ 
            K(\ZZ_{(p)}, 4) \ar^>>>>>i[r] & \Lp \tau_{\leq 2p} \BSL_p(\CC) \ar^{k_{2p}}[r] & K(\ZZ/p, 2p+2),}
        \end{equation}
        where the map $\B h_*$ is the result of applying a functorial
        classifying-space construction to the endomorphism of $K(\pi_{2p+1}\left( \Lp
        \BSL_p\right) , 2p+1)
        \we K( \ZZ/p,  2p+1)$ arising from the map $h_*$ on $\pi_{2p+1} \Lp
        \BSL_p$. The map $K(\ZZ_{(p)}, 4) \to K(\ZZ_{(p)}, 4)$ is the
        composition of $i$ with $h$ and the projection, and is a weak equivalence
        since $i$, $h$ and the projection all induce isomorphisms on $\pi_4$, by
        hypothesis. Since $i^*(k_{2p})\neq 0$ is a generator of
        $\Hoh^{2p+2}(K(\ZZ_{(p)},4),\ZZ/p))$, commutativity of the diagram proves that $h_*$ is an equivalence, as claimed.
    \end{proof}

\subsection{The \texorpdfstring{$p$}{p}-local homotopy type of \texorpdfstring{$\BPGL_p(\CC)$}{BPGLp(C)}}

There is a fiber sequence, obtained by truncating a sequence associated to the
defining quotient $\SL_p(\CC)/ \mu_p = \PGL_p(\CC)$, of the form
\[ \xymatrix{ \tau_{\le 2p+1} \BSL_p(\CC) \ar[r] & \tau_{\le 2p+1} \BPGL_p(\CC) \ar[r] &
K(\ZZ/p, 2). }\]
The main theorem concerns itself with maps $f: \tau_{\le 2p +1} \BPGL_p(\CC) \to
\tau_{\le 2p+1} \BPGL_p(\CC)$ that induce isomorphisms on
$\pi_2(\tau_{2p+1}\BPGL_p(\CC))$, these maps fit into diagrams
\[ \xymatrix{ \tau_{\le 2p+1} \BSL_p(\CC) \ar@{-->}^{\tilde f}[d] \ar[r] & \tau_{\le 2p+1} \BPGL_p(\CC)
  \ar[r] \ar[d] & K(\ZZ/p, 2) \ar@{-}^{\weq}[d] \\ 
 \tau_{\le 2p+1} \BSL_p(\CC) \ar[r] & \tau_{\le 2p+1} \BPGL_p(\CC) \ar[r] & K(\ZZ/p, 2) }\]
in which the right-hand square commutes up to homotopy. The map, $\tilde f$,
making the left-hand square commute up to homotopy exists, but is not unique. We
refer to such a map as a lift of the map $f$.

Since $\pi_{2p+1}\left(\BSL_p(\CC)\right)\iso\ZZ/(p!)$, it follows that
$\pi_{2p+1}\left(\BPGL_p(\CC)\right)\iso\ZZ/(p!)$, and hence that
\[\pi_{2p+1}\left(\Lp\BPGL_p(\CC)\right) \iso \ZZ/(p!) \tensor_\ZZ \ZZ_{(p)} \iso \ZZ/p. \]

The following lemma is a technical ingredient in Theorem \ref{thm:p}.

\begin{lemma} \label{l:lemma1}
    Let $f: \tau_{\le 2p+1} \BPGL_p(\CC) \to \tau_{\le 2p+1} \BPGL_p(\CC)$ be a map that induces an isomorphism
    \[\pi_2(f):\pi_2\left(\tau_{\leq
    2p+1}\BPGL_p(\CC)\right)\rightarrow\pi_2\left(\tau_{\leq 2p+1}\BPGL_p(\CC)\right)=\ZZ/p.\]
    Any lift $\tilde f: \tau_{\le 2p+1} \BSL_p(\CC) \to \tau_{\le 2p+1} \BSL_p(\CC)$ of $f$ has the property that the
    $p$-localization
    \[ \pi_4(\Lp \tilde f_*):  \pi_4\left(\BSL_p(\CC)\right) \tensor_\ZZ \ZZ_{(p)} \to
        \pi_4\left(\BSL_p(\CC)\right) \tensor_\ZZ \ZZ_{(p)}\]
    is an isomorphism.
\end{lemma}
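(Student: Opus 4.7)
The plan is to analyze the Postnikov extension of $\BPGL_p(\CC)$ at level $4$ and exploit naturality of the associated $k$-invariant.

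First, the long exact sequence of homotopy groups for the fibration $\tau_{\leq 2p+1}\BSL_p(\CC)\to\tau_{\leq 2p+1}\BPGL_p(\CC)\to K(\ZZ/p,2)$ yields an isomorphism $\pi_4(\BSL_p(\CC))\cong\pi_4(\BPGL_p(\CC))\cong\ZZ$, since $\pi_4$ and $\pi_5$ of $K(\ZZ/p,2)$ both vanish. Naturality identifies $\pi_4(\tilde f)$ with $\pi_4(f)$ as an endomorphism of $\ZZ$, so it suffices to show that this integer is a unit modulo $p$.

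Second, after $p$-localizing and truncating further to level $4$, I obtain the Postnikov fibration
\[ K(\ZZp,4)\to\Lp\tau_{\leq 4}\BPGL_p(\CC)\to K(\ZZ/p,2),\]
classified by a $k$-invariant $k\in\Hoh^5(K(\ZZ/p,2),\ZZp)$; a Bockstein calculation on the cohomology of $K(\ZZ/p,2)$ shows that this group is cyclic of order $p$. The key technical step is $k\neq 0$. I identify $k$ with the transgression $d_5(c_2)$ in the Serre spectral sequence for $\BSL_p(\CC)\to\BPGL_p(\CC)\to K(\ZZ/p,2)$, then verify non-vanishing via the class $c_2(\mathrm{ad})\in\Hoh^4(\BPGL_p(\CC),\ZZ)$ defined as the second Chern class of the rank $p^2-1$ adjoint vector bundle. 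A Chern root computation for $V\otimes V^* = \mathcal{O}\oplus\mathrm{ad}$ with $V$ an $\SL_p$-bundle shows that $c_2(\mathrm{ad})$ restricts to $2p\cdot c_2$ on $\BSL_p(\CC)$. Hence the image of $\Hoh^4(\BPGL_p(\CC),\ZZp)\to\Hoh^4(\BSL_p(\CC),\ZZp)=\ZZp\cdot c_2$ lies in $p\ZZp\cdot c_2$, so $c_2$ does not survive the spectral sequence, and $d_5(c_2)=k$ must generate $\ZZ/p$.

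Finally, set $a=\pi_2(f)\in(\ZZ/p)^{\times}$ and $b=\pi_4(\tilde f)=\pi_4(f)\in\ZZ$. The truncated map $\Lp\tau_{\leq 4}f$ covers multiplication by $a$ on $K(\ZZ/p,2)$ and acts on the fiber $K(\ZZp,4)$ as multiplication by $b$. Postnikov compatibility yields the relation $b\cdot k=(a\cdot)^* k$ in $\Hoh^5(K(\ZZ/p,2),\ZZp)$; since $(a\cdot)^* u = au$ on $\Hoh^2(K(\ZZ/p,2),\ZZ/p)$ and the Bockstein and cup product are natural, $(a\cdot)^*$ acts on $\Hoh^5(K(\ZZ/p,2),\ZZp)$ as multiplication by $a^2$. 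Therefore $b\equiv a^2\pmod{p}$, a unit, so $\pi_4(\tilde f)\otimes\ZZp$ is an isomorphism. The main obstacle is verifying $k\neq 0$; the adjoint Chern class computation is the cleanest uniform route, though invoking the known low-degree structure of $\Hoh^*(\BPGL_p(\CC),\ZZ)$ would serve equally well.
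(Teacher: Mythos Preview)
Your overall strategy coincides with the paper's: analyze the Postnikov extension
\[
K(\ZZ,4)\longrightarrow\tau_{\le 4}\BPGL_p(\CC)\longrightarrow K(\ZZ/p,2),
\]
use that its $k$-invariant is a generator of the cyclic $p$-group $\Hoh^5(K(\ZZ/p,2),\ZZ)$, and conclude from naturality of the classifying map that $\pi_4(f)$ acts by an integer prime to $p$. Your sharper relation $b\equiv a^2\pmod p$ is a pleasant refinement of the paper's argument, which only extracts that $b$ is a $p$-adic unit.

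There is, however, a genuine gap in your verification that $k\neq 0$. Exhibiting the single class $c_2(\mathrm{ad})$ and showing it restricts to $2p\,c_2$ on $\BSL_p$ does \emph{not} imply that the entire image of $\Hoh^4(\BPGL_p,\ZZp)\to\Hoh^4(\BSL_p,\ZZp)=\ZZp\cdot c_2$ lies in $p\ZZp\cdot c_2$; for that you would need to know that $c_2(\mathrm{ad})$ is a $p$-local generator of $\Hoh^4(\BPGL_p,\ZZp)$. In the Serre spectral sequence for $\BSL_p\to\BPGL_p\to K(\ZZ/p,2)$ the image of restriction is exactly $E_\infty^{0,4}=\ker\bigl(d_5\colon\ZZp c_2\to\Hoh^5(K(\ZZ/p,2),\ZZp)\bigr)$, so the statement ``image $\subseteq p\ZZp c_2$'' is \emph{equivalent} to $d_5(c_2)=k\neq 0$, which is what you set out to prove. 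The paper avoids this circularity by citing~\cite{aw3}, where the $k$-invariant is computed directly and shown to generate $\Hoh^5(K(\ZZ/p,2),\ZZ)\cong\ZZ/p^\epsilon$ with $\epsilon=1$ for $p$ odd and $\epsilon=2$ for $p=2$ (so your claim that the group has order exactly $p$ is also off at $p=2$, though this is harmless since then $(\ZZ/2)^\times$ is trivial). Your fallback suggestion of invoking the known low-degree integral cohomology of $\BPGL_p$ would indeed close the gap, but the adjoint Chern class computation alone does not.
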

 \begin{proof}
        The first nontrivial fiber sequence appearing in the Postnikov tower of
        $\BPGL_p(\CC)$ is
        \[ \xymatrix{ K(\ZZ, 4) \ar[r] & \tau_{\le 4}\BPGL_p(\CC) \ar[r] & K(\ZZ/p,
        2). } \]
        In~\cite{aw3} we proved that the $K(\ZZ,4)$--bundle above is classified by a
        map $K(\ZZ/p, 2) \to K(\ZZ, 5)$ that represents a generator of the group
        $\Hoh^5(K(\ZZ/p, 2), \ZZ) \iso \ZZ/p^\epsilon$, where $\epsilon =1$ unless
        $p=2$ in which case $\epsilon=2$. If $f$ induces an
        isomorphism on $\pi_2$, it must also induce a map
        $f_*$ on $\pi_4\left(\tau_{\leq 2p+1}\BPGL_p(\CC)\right) \iso \ZZ$ such that the functorially-derived diagram
        \[ \xymatrix{ K(\ZZ/p, 2) \ar[d]^{\iso} \ar[r] & K(\ZZ, 5) \ar^{\B f_*}[d] \\
        K(\ZZ/p, 2) \ar[r] & K(\ZZ, 5) } \]
        commutes. The class $\B f_* \in \Hoh^5(K(\ZZ, 5), \ZZ) = \ZZ$ must 
        be an integer that is relatively prime to $p$, and in turn the endomorphism
        induced by $f$ on $\pi_4\left( \tau_{\leq 2p+1} \BPGL_p(\CC)\right) \iso \ZZ$ must be
        multiplication by an integer that is relatively prime to $p$. The map induced by
        $f$ on $\pi_4\left( \Lp \tau_{\leq 2p+1} \BPGL_p(\CC)\right)$ is consequently an isomorphism.

        For any choice of $\tilde f$, the $p$-localized diagram
        \[ \xymatrix{ \Lp \tau_{\leq 2p+1} \BSL_p(\CC) \ar^{\Lp \tilde f}[d] \ar[r] & \Lp
        \tau_{\leq 2p+1}
        \BPGL_p(\CC) \ar^{\Lp f}[d] \\ \Lp \tau_{\leq 2p+1} \BSL_p(\CC)  \ar[r] & \Lp
        \tau_{\leq 2p+1}
        \BPGL_p(\CC) } \]
        commutes. Here the horizontal arrows induce isomorphisms on all homotopy
        groups, $\pi_i$, where $i\ge 3$, and the result follows.
    \end{proof}

We are now in a position to prove the main topological theorem of the paper.

\begin{theorem}\label{thm:p}
    Let $f:\tau_{\leq 2p+1}\BPGL_p(\CC)\rightarrow\tau_{\leq 2p+1}\BPGL_p(\CC)$ be a 
    map that induces an isomorphism 
        \[\pi_2(f):\pi_2\left(\tau_{\leq  2p+1}\BPGL_p(\CC)\right)\to\pi_2\left(\tau_{\leq
        2p+1}\BPGL_p(\CC)\right)\iso\ZZ/p.\] 
        Then \[\pi_{2p+1}(\Lp f):\pi_{2p+1}\left(\Lp\tau_{\leq
          2p+1}\BPGL_p(\CC)\right) \to \pi_{2p+1}\left(\Lp\tau_{\leq 2p+1}\BPGL_p(\CC)\right)\] is an isomorphism.
    \begin{proof}
        Suppose $f$ is a map meeting the hypothesis of the theorem. Choose a lift,
        $\tilde f : \tau_{\leq 2p+1} \BSL_p(\CC) \to \tau_{\leq 2p+1} \BSL_p(\CC)$. By
        Lemma~\ref{l:lemma1}, the map $\tilde f_*$ is an isomorphism on $\pi_4 \left( \Lp
        \tau_{\leq 2p+1} \BSL_p(\CC)\right)$, and therefore by Corollary~\ref{l:lemma2},
        $\pi_{2p+1}(\Lp\tilde f)$ is an isomorphism.

        Since the projection $\Lp \tau_{\leq 2p+1} \BSL_p(\CC) \to \Lp \tau_{\leq 2p+1}
        \BPGL_p(\CC)$ induces an isomorphism on all higher homotopy groups $\pi_i$
        where $i\ge 3$, it follows that $f_*$ is an isomorphism on $\pi_{2p+1}\left(\Lp
        \tau_{\leq 2p+1} \BPGL_p(\CC)\right)$, as claimed.  
    \end{proof}
\end{theorem}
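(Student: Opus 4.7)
The plan is to reduce the statement to Corollary~\ref{l:lemma2} by lifting $f$ through the fiber sequence
\[ \tau_{\le 2p+1} \BSL_p(\CC) \to \tau_{\le 2p+1} \BPGL_p(\CC) \to K(\ZZ/p, 2) \]
to a self-map of $\tau_{\le 2p+1} \BSL_p(\CC)$. As explained in the discussion preceding Lemma~\ref{l:lemma1}, the hypothesis that $f$ is an isomorphism on $\pi_2$ guarantees that $f$ covers the identity of $K(\ZZ/p,2)$ up to homotopy, so a (non-unique) lift $\tilde f : \tau_{\le 2p+1} \BSL_p(\CC) \to \tau_{\le 2p+1} \BSL_p(\CC)$ exists.

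Next I would invoke Lemma~\ref{l:lemma1} with this particular choice of $\tilde f$: the $\pi_2$-hypothesis on $f$ forces $\pi_4(\Lp \tilde f)$ to be an isomorphism on $\pi_4(\Lp \tau_{\le 2p+1} \BSL_p(\CC)) \iso \ZZp$. Then Corollary~\ref{l:lemma2}, which is the essential input, upgrades this to the fact that $\pi_{2p+1}(\Lp \tilde f)$ is an isomorphism on $\pi_{2p+1}(\Lp \tau_{\le 2p+1} \BSL_p(\CC)) \iso \ZZ/p$.

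Finally I would transfer this from $\BSL_p(\CC)$ back to $\BPGL_p(\CC)$. Because $\mu_p$ is the (finite, discrete) kernel of $\SL_p(\CC) \to \PGL_p(\CC)$, the map $\BSL_p(\CC) \to \BPGL_p(\CC)$ has homotopy fiber $K(\ZZ/p,1)$, so it is an isomorphism on all $\pi_i$ with $i \ge 3$; the same holds for the truncated, $p$-localized map $\Lp \tau_{\le 2p+1} \BSL_p(\CC) \to \Lp \tau_{\le 2p+1} \BPGL_p(\CC)$. By the commutativity of the square relating $f$ and $\tilde f$ after $p$-localization, the identification $\pi_{2p+1}(\Lp \tau_{\le 2p+1} \BSL_p(\CC)) \iso \pi_{2p+1}(\Lp \tau_{\le 2p+1} \BPGL_p(\CC))$ conjugates $\pi_{2p+1}(\Lp \tilde f)$ with $\pi_{2p+1}(\Lp f)$, yielding the desired conclusion.

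The main conceptual obstacle is absorbed into Corollary~\ref{l:lemma2} and ultimately into Proposition~\ref{prop:nonsplit}, namely the fact that the $k$-invariant $k_{2p}$ restricts to a generator of $\Hoh^{2p+2}(K(\ZZp,4),\ZZ/p)$; once that nonvanishing is in hand, the proof of the theorem itself is little more than chasing the lift $\tilde f$ through Lemma~\ref{l:lemma1} and Corollary~\ref{l:lemma2}, together with the standard observation that $\BSL_p(\CC) \to \BPGL_p(\CC)$ is a higher-homotopy isomorphism.
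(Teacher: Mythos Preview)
Your proposal is correct and follows essentially the same route as the paper's own proof: lift $f$ to $\tilde f$ on $\tau_{\le 2p+1}\BSL_p(\CC)$, apply Lemma~\ref{l:lemma1} to get the $\pi_4$-isomorphism, feed that into Corollary~\ref{l:lemma2} to obtain the $\pi_{2p+1}$-isomorphism, and then transfer back via the fact that $\BSL_p(\CC)\to\BPGL_p(\CC)$ is an isomorphism on $\pi_i$ for $i\ge 3$. One small inaccuracy: $f$ need not cover the \emph{identity} of $K(\ZZ/p,2)$ but only the automorphism determined by $\pi_2(f)$; this does not affect the existence of $\tilde f$ or anything downstream.
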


\section{Purity}

We consider purity in this section, giving two applications of algebraic topology to
algebraic purity questions. The first uses the machinery of Section~\ref{sec:topology} to show
that purity fails in general for $\PGL_p$ torsors, while the second uses~\cite{aw3}*{Theorem
D} to show that purity fails for the cohomological filtration on the Witt group.

\subsection{Definitions}

Let $\Fscr:\mathscr{C}^{\op}\rightarrow\mathrm{Sets}$ be a presheaf on some category of schemes
$\mathscr{C}$. We will suppress any mention of the category $\mathscr{C}$ throughout, and we will assume that all
necessary localizations of an object $X$ in $\mathscr{C}$ are also in $\mathscr{C}$. Suppose that $X$ is a regular
noetherian integral scheme in $\mathscr{C}$, and let $K$ be the function field of $X$. If the natural map
\begin{equation*}
    \im(\Fscr(X)\rightarrow\Fscr(\Spec K))\rightarrow \bigcap_{P\in
        X^{(1)}}\im(\Fscr(\Spec\Oscr_{X,P})\rightarrow\Fscr(\Spec K))
\end{equation*}
is a bijection, where $X^{(1)}$ denotes the set of codimension $1$ points of $X$, then we
say that \emph{purity} holds for $\Fscr(X)$.

\begin{example}\label{ex:br}
    If $X$ is a regular noetherian integral scheme with an ample line bundle such that
    $\QQ\subseteq\Gamma(X,\Oscr_X)$, then purity holds for $\Br(X)$. In
    particular, purity holds for $\Br(X)$ for smooth quasi-projective schemes over
    field of characteristic $0$.
    This follows from two facts. First, it is a theorem of Gabber and de
    Jong~\cite{dejong-gabber}
    that if $X$ has an ample line bundle, then $\Br(X)=\Hoh^2_{\et}(X,\Gm)_{\tors}$.
    Second, Gabber has shown (see Fujiwara~\cite{fujiwara}) that $\Hoh^2_{\et}(X,\Gm)_{\tors}$ satisfies purity
    when $X$ is a regular scheme and when each positive integer is invertible in $X$.
    The case of smooth affine schemes over fields had been handled previously by
    Hoobler~\cite{hoobler}, following Auslander and Goldman's
    work on the $2$-dimensional affine situation~\cite{auslander-goldman}*{Proposition 6.1},
    while Gabber~\cite{gabber-note} had proved the result in characteristic $0$
    with an added excellence condition.
    Gabber~\cite{gabber} proved purity for $\Hoh^2_{\et}(X,\Gm)_{\tors}$ without the excellence
    hypothesis when $\dim X\leq 3$; hence, in combination with the $\Br=\Br'$ result above,
    purity holds for the Brauer group when $\dim X\leq 3$ and $X$ has an ample line bundle.
    If $X$ is an arbitrary regular noetherian integral scheme, then
    purity holds for $\Br(X)'$, the part of the Brauer group containing the $m$-torsion for all
    $m>0$ invertible in $X$. This
    follows from purity for $\Hoh^2_{\et}(X,\mu_n)$ when $n$ is prime to $p$.
    See Fujiwara~\cite{fujiwara} together with \cite{sga4-3}*{Expos\'ee XIV, Section 3}
    or~\cite{colliot-thelene-birational}*{Theorem 3.8.2}.
\end{example}

Currently unknown is whether purity holds for $\Br(X)$ for every regular noetherian integral scheme $X$.
The results above should be contrasted to what happens for degree $3$ cohomology classes:
for any integer $n>1$,
there are smooth projective complex varieties $X$ such that purity fails for
$\Hoh^3_{\et}(X,\ZZ/n)$. See~\cite{colliot-thelene-voisin}*{Section 5}
for an overview, or Totaro~\cite{totaro-torsion} and Schoen~\cite{schoen} for examples. It is not hard
to see that unramified cohomology is homotopy invariant~\cite{totaro-injectivity}*{Theorem
1.3}, so it follows by using Jouanolou's device~\cite{jouanolou} that there are smooth
affine complex varieties where purity fails for $\Hoh^3_{\et}(X,\ZZ/n)$ as well.

\subsection{Purity for torsors}\label{sec:pt}

Let $X$ be a regular noetherian integral scheme, and let $G$ be a smooth reductive group scheme over $X$.
In~\cite{colliot-thelene-sansuc}*{Question 6.4}, Colliot-Th\'el\`ene and Sansuc ask whether purity holds for $\Hoh^1_{\et}(X,G)$. As stated
in the introduction, many examples are known where purity holds in the special case where $X=\Spec R$ is the spectrum of a regular
noetherian local ring $R$. But, as far as the authors are aware, except for our negative
results~\cite{aw4} for $G=\PGL_2$, no results are known in
the non-local case, either for or against purity, except in some trivial cases such as for special groups like $\SL_n$ and in the following
two theorems.

\begin{theorem}[\cite{colliot-thelene-sansuc}*{Corollaire 6.9}]
    Purity holds for $\Hoh^1_{\et}(X,G)$ for all regular noetherian integral schemes $X$ and
    all finite type $X$-group schemes of multiplicative type $G$.
\end{theorem}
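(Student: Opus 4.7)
The plan is to reduce the statement to classical purity for the Picard group. Any finite type group scheme of multiplicative type $G$ over $X$ is, by Cartier duality, determined by a finitely generated étale sheaf of abelian groups $M = \underline{\Hom}(G,\Gm)$. Filtering $M$ by its torsion and torsion-free parts produces a short exact sequence $1 \to G_1 \to G \to G_0 \to 1$ expressing $G$ as an extension of a torus $G_0$ by a finite multiplicative type group $G_1$. The associated long exact sequences in étale cohomology, compared over $X$, over each $\Spec\Oscr_{X,P}$, and over $\Spec K$, reduce purity for $\Hoh^1_{\et}(X,G)$ to purity for $\Hoh^i_{\et}(X,-)$, $i=0,1$, applied separately to tori and to finite multiplicative type groups.

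For $G=\Gm$, we have $\Hoh^1_{\et}(X,\Gm)=\Pic(X)$, and purity is the classical statement that a line bundle on $X$ extending over every codimension-one local ring extends globally, a consequence of the identification of Weil and Cartier divisors on a regular noetherian integral scheme. For $G=\mu_n$, the Kummer sequence yields
\begin{equation*}
    0 \to \Gamma(X,\Oscr_X^*)/n \to \Hoh^1_{\et}(X,\mu_n) \to \Pic(X)[n] \to 0,
\end{equation*}
where purity for the first term follows from $\Oscr_X = \bigcap_{P\in X^{(1)}}\Oscr_{X,P}$ together with the fact that a function is a unit globally iff it is a unit at every codimension-one point, and purity for the third term is inherited from purity for $\Pic(X)$. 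Together these handle the split case $G = \Gm^r \times \prod_i \mu_{n_i}$.

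For the general non-split case, I would descend from a finite étale Galois cover $f:Y\to X$ with group $\Gamma$ that splits $G$. The Hochschild--Serre spectral sequence
\begin{equation*}
    \Eoh_2^{p,q} = \Hoh^p(\Gamma,\Hoh^q_{\et}(Y,G)) \Rightarrow \Hoh^{p+q}_{\et}(X,G),
\end{equation*}
together with its variants over $\Spec\Oscr_{X,P}$ and $\Spec K$, should transfer purity from the split case to the general case by termwise comparison. The main obstacle is verifying that the operation of taking the intersection over codimension-one points commutes compatibly with this descent, which requires showing that a class on $Y$ which is $\Gamma$-equivariantly unramified at codimension-one points of $X$ is in fact unramified at codimension-one points of $Y$. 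This is handled by the observation that $f$ is étale, so each codimension-one point of $Y$ lies over a codimension-one point of $X$ via a finite étale extension of discrete valuation rings, and a diagram chase in the descent spectral sequences then yields the result.
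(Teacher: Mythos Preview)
The paper does not contain a proof of this theorem; it is quoted as a result of Colliot-Th\'el\`ene and Sansuc \cite{colliot-thelene-sansuc}*{Corollaire 6.9} and used as background, with no argument supplied. So there is nothing in the paper to compare your attempt against.

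That said, your sketch has the right shape but two genuine gaps. First, the d\'evissage via the exact sequence $1 \to G_1 \to G \to G_0 \to 1$ does not reduce purity for $\Hoh^1(-,G)$ to purity for $\Hoh^1(-,G_0)$ and $\Hoh^1(-,G_1)$ by a simple long-exact-sequence argument: an unramified class in $\Hoh^1(K,G)$ pushes to an unramified class in $\Hoh^1(K,G_0)$, which by purity comes from $\Hoh^1(X,G_0)$, but lifting back to $\Hoh^1(X,G)$ runs into the boundary map to $\Hoh^2(X,G_1)$, and you have not said why that obstruction vanishes. You would need purity (or at least injectivity into $\Hoh^2(K,G_1)$) in degree $2$ for finite multiplicative groups, which is a separate input you have not invoked. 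Second, the Hochschild--Serre step is only a heuristic as written: ``termwise comparison'' of spectral sequences does not automatically transport a statement about images in $\Hoh^1(K,-)$, because the filtration on the abutment and the differentials intervene. The standard way around both issues, and the one closer to what Colliot-Th\'el\`ene and Sansuc actually do, is to embed $G$ into a quasi-trivial torus $R = \prod_i \mathrm{Res}_{Y_i/X}\Gm$ for finite \'etale $Y_i \to X$; then $\Hoh^1(X,R) \iso \bigoplus_i \Pic(Y_i)$, purity for which is the classical Weil-divisor statement on each regular $Y_i$, and one controls $G$ via the cokernel.
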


\begin{theorem}[\cite{colliot-thelene-sansuc}*{Th\'eor\`eme 6.13}]\label{thm:cts2}
    Purity holds for $\Hoh^1_{\et}(X,G)$ for all regular noetherian integral $2$-dimensional
    schemes $X$ and all smooth reductive $X$-group schemes $G$.
\end{theorem}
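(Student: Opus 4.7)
The plan is to reduce purity in dimension $2$ to a local extension problem at isolated codimension-$2$ points, and then to attack that problem by devissage along the structure of the reductive group.

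First I would spread out. Given a class $\alpha \in \Hoh^1_{\et}(\Spec K, G)$ lying in the target of~\eqref{eq:purity}, a standard limit argument using the compatible data over each $\Spec\Oscr_{X,P}$ for $P \in X^{(1)}$ produces an open $U \subseteq X$ containing every codimension-$1$ point, together with a $G|_U$-torsor $\Xi$ whose generic fibre represents $\alpha$. Because $\dim X = 2$, the complement $Z = X \setminus U$ is zero-dimensional, hence a finite set of closed points $x_1, \dots, x_r$. Setting $R_i = \Oscr_{X, x_i}$ and $V_i = \Spec R_i \setminus \{x_i\}$, it suffices to extend each $\Xi|_{V_i}$ to a $G$-torsor on $\Spec R_i$, after which Zariski gluing of these local extensions with $\Xi$ produces a $G$-torsor on $X$ whose generic fibre realizes $\alpha$, giving the surjectivity required by~\eqref{eq:purity}.

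The heart of the proof is therefore the following extension claim: for $R$ a $2$-dimensional regular local ring with maximal ideal $\mathfrak{m}$, $G$ a smooth reductive $R$-group scheme, and $V = \Spec R \setminus \{\mathfrak{m}\}$, every $G|_V$-torsor extends to $\Spec R$. I would proceed by devissage using the short exact sequences
\[ 1 \to G^{\mathrm{der}} \to G \to G^{\mathrm{tor}} \to 1, \qquad 1 \to \mu \to G^{\mathrm{ssc}} \to G^{\mathrm{der}} \to 1, \]
where $G^{\mathrm{tor}}$ is a torus, $\mu$ is a finite group of multiplicative type, and $G^{\mathrm{ssc}}$ is semisimple simply connected. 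The associated long exact cohomology sequences reduce the extension problem for $G$ to three inputs on the pair $(\Spec R, V)$: purity for $\Hoh^1$ with coefficients in a torus, purity for $\Hoh^i$ with $i \leq 2$ and coefficients in a finite group of multiplicative type, and the triviality of $\Hoh^1_{\et}(\Spec R, G^{\mathrm{ssc}})$. The torus and multiplicative-type statements follow from Auslander--Buchsbaum-style purity for Picard groups and standard cohomological-dimension bounds for $2$-dimensional regular schemes, while the simply connected semisimple input is the classical Grothendieck--Serre statement in dimension at most $2$ due to Nisnevich.

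The main obstacle is the simply connected semisimple step. One needs not only that $\Hoh^1_{\et}(\Spec R, G^{\mathrm{ssc}}) = 1$ but also enough control of $\Hoh^1_{\et}(V, G^{\mathrm{ssc}})$ and of the connecting maps arising from the two short exact sequences above, so that the obstruction to lifting a $V$-torsor past the devissage actually vanishes or lifts through the relevant $\mu$- and $T$-cohomology. Once this technical point is settled, reassembling across the finitely many bad points $x_i$ with the spread-out torsor $\Xi$ yields the desired $G$-torsor on $X$ and completes the proof.
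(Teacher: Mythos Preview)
The paper does not prove this theorem at all: it is quoted verbatim from \cite{colliot-thelene-sansuc}*{Theorem~6.13} as background, with no argument supplied. There is therefore nothing in the present paper to compare your proposal against.

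That said, your outline is in the spirit of the original Colliot-Th\'el\`ene--Sansuc argument: spread out to an open containing $X^{(1)}$, use $\dim X=2$ to reduce to extending across finitely many closed points, and attack the local problem at a $2$-dimensional regular local ring by d\'evissage through $G^{\mathrm{tor}}$, $\mu$, and $G^{\mathrm{ssc}}$. Two cautions are worth flagging. First, the gluing step is not as innocent as ``Zariski gluing'': the local extension over $\Spec R_i$ lives on a scheme that is not open in $X$, so one must spread each local extension out to an honest Zariski neighbourhood of $x_i$ and then check that the resulting torsors agree on overlaps, which requires some uniqueness input (or a direct fpqc descent argument). Second, invoking ``Nisnevich'' for the simply connected step is anachronistic relative to \cite{colliot-thelene-sansuc} (1979); the original proof handles the semisimple simply connected case by more direct means available at the time, and in any case what one actually needs is control of $\Hoh^1_{\et}(V,G^{\mathrm{ssc}})$ over the punctured spectrum, not merely triviality over $\Spec R$. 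You correctly identify this as the main obstacle, but ``once this technical point is settled'' is doing a lot of work in your last paragraph.
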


Before we prove our main theorem, we need a standard result.

\begin{lemma}\label{lem:dvr}
    Let $R$ be a discrete valuation ring, and let $\alpha\in\Br(R)\subseteq\Br(K)$ be a Brauer class. If
    $D$ is a central simple algebra over $K$, the fraction field of $R$, with Brauer class
    $\alpha$, then every maximal order $A$ in $D$ is Azumaya over $R$.
    \begin{proof}
        A maximal order $A$ is in particular reflexive. Since a reflexive module on a
        regular domain of dimension at most $2$ is projective, $A$ is projective. The
        lemma now follows from the argument in the second paragraph of the proof
        of~\cite{auslander-goldman}*{Proposition 7.4}.
    \end{proof}
\end{lemma}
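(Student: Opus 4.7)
First I would show $A$ is finitely generated projective over $R$. As a maximal order in the finite-dimensional $K$-algebra $D$, $A$ is finitely generated and reflexive as an $R$-module (any element of $D$ that lies in $A$ at every height-one localization already lies in the maximal order $A$). Since $R$ is a DVR, and in particular a regular noetherian domain of dimension one, every finitely generated reflexive $R$-module is free. Hence $A$ is $R$-free of rank $\dim_K D$.

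Next I would exploit the hypothesis $\alpha \in \Br(R)$ to reduce the remaining Azumaya property to a matrix algebra. Pick an Azumaya $R$-algebra $B$ of rank $n^2$ with $[B] = \alpha$ in $\Br(R)$, and form $A' := A \otimes_R B^{\mathrm{op}}$. Since both $D$ and $B_K := B \otimes_R K$ carry the class $\alpha$, the generic fibre $A'_K \cong D \otimes_K B_K^{\mathrm{op}}$ is Brauer trivial, hence isomorphic to $M_m(K)$ for some $m$. Using the fact that tensoring a maximal order with an Azumaya algebra yields a maximal order---this is the substance of the second paragraph of the cited \cite{auslander-goldman}*{Proposition 7.4}---the algebra $A'$ is a maximal $R$-order in $M_m(K)$. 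Over a DVR any maximal order in $M_m(K)$ is conjugate to $M_m(R)$, which is plainly Azumaya, so $A'$ is Azumaya.

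Finally, because $B$ is Azumaya we have $B^{\mathrm{op}} \otimes_R B \cong \End_R(B) \cong M_{n^2}(R)$, and therefore
$A' \otimes_R B \;\cong\; A \otimes_R M_{n^2}(R) \;\cong\; M_{n^2}(A)$.
The left-hand side is a tensor product of two Azumaya $R$-algebras, hence Azumaya, so $M_{n^2}(A)$ is Azumaya, and Morita invariance of the Azumaya property forces $A$ itself to be Azumaya over $R$.

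The step I expect to be the main obstacle is the claim that $A \otimes_R B^{\mathrm{op}}$ remains a maximal order, since tensor products of maximal orders are not maximal in general; this is precisely the non-formal ingredient the authors are borrowing from Auslander--Goldman. The other steps---reflexivity implies projectivity over a DVR, the classification of maximal orders in $M_m(K)$, and the Morita-invariant passage from $M_{n^2}(A)$ back to $A$---are essentially standard bookkeeping once that ingredient is in hand.
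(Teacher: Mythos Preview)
Your proof is correct and follows essentially the same route as the paper's: establish projectivity via reflexivity, then invoke the Auslander--Goldman reduction. The paper simply cites ``the second paragraph of the proof of~\cite{auslander-goldman}*{Proposition 7.4}'' as a black box, whereas you have unpacked that citation into the explicit tensor-with-$B^{\mathrm{op}}$ argument reducing to the split case. One small attribution point: the fact that tensoring a maximal order with an Azumaya algebra preserves maximality is not proved in the second paragraph of Proposition~7.4 itself but is an earlier result in Auslander--Goldman that the second paragraph \emph{uses}; the substance of that paragraph is the reduction-to-$M_m(K)$ argument you carried out, together with the identification of projective maximal orders in $M_m(K)$ as endomorphism algebras of projective modules (hence Azumaya). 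Your version, specializing to a DVR and using the classification of maximal orders in $M_m(K)$ over a DVR, is a clean and correct variant of the same idea.
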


The goal of this paper is to show that Theorem~\ref{thm:cts2} does not extend to higher-dimensional schemes.
The method is based on~\cite{aw4}, augmented by the results of
Section~\ref{sec:topology}.

Let $a, b$ be positive integers and let $\P(a,ab)$ denote the complex algebraic group $\SL_{ab}(\CC)/\mu_a$. There is a
commutative diagram of short exact sequences of groups
\[ 
\xymatrix{ 1 \ar[r] & \mu_a  \ar[d] \ar[r] & \SL_{ab}(\CC) \ar[d] \ar[r] & \P(a,ab) \ar[r] \ar[d] & 1 \\
1\ar[r] & \CC^* \ar[r] & \GL_{ab}(\CC) \ar[r] & \PGL_{ab}(\CC) \ar[r] & 1. } \]
and therefore, for any topological space $X$, a commutative square
\begin{equation} \label{eq:ators}\xymatrix{\Hoh^1(X, \P(a,ab)) \ar[r] \ar[d]  & \Hoh^2(X, \ZZ/a) \ar[d] \\ \Hoh^1(X, \PGL_{ab}(\CC)) \ar[r] & \Hoh^2( X, \CC^*)
\iso \Hoh^3(X, \ZZ) }.  \end{equation}
If we have a principal $\P(a,ab)$-bundle on a topological space $X$, then the quotient map $\P(a,ab) \to \PGL_{ab}(\CC)$
gives rise to a principal $\PGL_{ab}(\CC)$-bundle and therefore a degree $ab$ topological Azumaya algebra. Diagram \ref{eq:ators}
implies that this Azumaya algebra is of exponent dividing $a$.

Similarly, in the category of schemes over $\CC$, an $\SL_{ab}/\mu_a$-torsor (for the \'etale topology) gives rise to a
degree $ab$ Azumaya algebra, and the exponent of this Azumaya algebra divides $a$.

We rely on the following argument repeatedly: If $X$ is a simply connected topological space, then $\pi_2(X) \iso
\Hoh_2(X, \ZZ)$ by the Hurewicz theorem. Then, by the universal coefficient theorem, the torsion $\Br(X) = \Hoh^3(X, \ZZ)_{\tors}$
is naturally the dual of the torsion subgroup of $\Hoh_2(X, \ZZ) \iso \pi_2(X)$. In the cases we consider, $\pi_2(X)$ is itself a
torsion abelian group, and therefore $\Br(X)$ is naturally the dual of $\pi_2(X)$.

\begin{lemma}\label{lem:paab}
  Suppose 
$f :X \to \BP(a,ab)$ is a $3$-equivalence of topological spaces. Denote the Azumaya algebra associated to the
$\PGL_{ab}(\CC)$ bundle classified by the composite $X \overset{f}{\to}  \BP(a,ab) \to \BPGL_{ab}(\CC)$ by $\Ascr(\CC)$.  
  The exponent of $\Ascr(\CC)$ is $a$.
\end{lemma}
\begin{proof}
  Since $f^*$ is a $3$-equivalence, the Hurewicz and universal coefficients theorems imply that it
  induces an isomorphism on $\Br_\topo( \cdot) \iso \Hoh^3(\cdot, \ZZ)_{\tors}$. It suffices therefore to show that the map
  $\phi^* : \Br_\topo(\BPGL_{ab}(\CC)) \to \Br_\topo( \BP(a,ab))$ takes a generator to a class of order $a$.

  The following is a diagram of short exact sequences of groups
  \[
  \xymatrix{
    1 \ar[r] & \mu_a \ar^i[d] \ar[r] & \SL_{ab}(\CC) \ar@{=}[d] \ar[r] & \P(a,ab) \ar^\phi[d] \ar[r] & 1 \\
    1 \ar[r] & \mu_a \ar[r] & \SL_{ab}(\CC) \ar[r] & \PGL_{ab}(\CC)\ar[r] & 1. }
  \]
  Here $i$ denotes the inclusion $\mu_a \subset \mu_{ab}$. This gives rise to a map of fiber sequences:
  \[
  \xymatrix{
   \BSL_{ab}(\CC) \ar@{=}[d] \ar[r] & \BP(a,ab) \ar^\phi[d] \ar[r] & \B^2 \mu_a \ar^{\B^2 i}[d] \\
   \BSL_{ab}(\CC) \ar[r] & \BPGL_{ab}(\CC) \ar[r] & \B^2 \mu_{ab}.
   }
  \]
  Since $\widetilde \Hoh^*(\SL_{ab}(\CC), \ZZ)$ vanishes below degree $4$, the natural map of 
  Serre spectral sequences for $\Hoh^*(\cdot, \ZZ)$ yields to a commutative square
  \[ \xymatrix{
  \Br_\topo( \BP(a,ab)) \ar@{=}[r] & \Hoh^3( \BP(a,ab), \ZZ) & \ar^\iso[l] \Hoh^3( \B^2 \mu_a, \ZZ) \iso \ZZ/a \\
  \Br_\topo(\BPGL_{ab}(\CC)) \ar@{=}[r] & \Hoh^3(\BPGL_{ab}(\CC), \ZZ) \ar^{(\B \phi)^*}[u] & \ar^\iso[l] \Hoh^3( \B^2 \mu_ab, \ZZ) \iso \ZZ/(ab) \ar^{(\B^2 i)^*}[u].}
  \]
  Where $(\B^2 i)^*$, by means of the Hurewicz and universal coefficient theorems, is seen to be the dual of the
  inclusion $\ZZ/a  = \mu_a \subset \mu_{ab} = \ZZ/(ab)$. Namely, it is a surjection $\ZZ/(ab) \to \ZZ/a$, as required.
\end{proof}

We now can prove our main theorem:

\begin{theorem}\label{thm:purity}
    Let $p$ be a prime. There exists a smooth affine complex variety $X$ of dimension
    $2p+2$ such that purity fails for $\Hoh^1_{\et}(X,\PGL_p)$.
\end{theorem}
 \begin{proof}
      Let $q>1$ be an integer prime to $p$. Let $V$ be an algebraic representation of the
      complex algebraic group $G = \SL_{pq}/\mu_p$ such that there is a $G$-invariant closed
      subvariety $S$ of codimension at least $p+2$ with the following properties: the
      complement $V-S$ is contained in the stable locus of the $G$-action on $V$ (in the
      sense of~\cite{mumford-fogarty-kirwan}) for some $G$-linearization of $\Oscr_V$, and
      $G$ acts freely on $V-S$. Such a representation is
      constructed in~\cite{totaro}*{Remark 1.4} by taking a large direct sum of any faithful
      $G$-representation. There is a universal geometric quotient
      $q:(V-S)\rightarrow (V-S)/G$ with $(V-S)/G$ a quasi-projective variety~\cite{mumford-fogarty-kirwan}. Moreover,
      $(V-S) \to (V-S)/G$ is an algebraic principal $G$ bundle, and $(V-S)/G$ is smooth,
      since $(V-S)\to(V-S)/G$ is a smooth surjective morphism with $(V-S)$ smooth. We can replace
      $(V-S)/G$ by an affine scheme using Jouanolou's device~\cite{jouanolou}, and then we can use the affine
      Lefschetz theorem~\cite{goresky-macpherson}*{Introduction, Section 2.2} to cut down to a $2p+2$-dimensional closed
      subscheme $X$. Pulling $q$ back along $X \to (V-S)/G$ gives an algebraic $G$-torsor $E \to
      X$, and therefore an induced $\PGL_{pq}$-torsor $E \times_{\P(p,pq)} \PGL_{pq}$, and finally an associated
      (algebraic) Azumaya algebra $\Ascr$ on $X$. Write $\alpha \in \Br(X)$ for the class of $\Ascr$; since $\Ascr$ is
      induced from a principal $\SL_{pq}/\mu_p$-bundle, the exponent of $\alpha$ divides $p$.

      The map $X \to (V-S)/G$ is an affine vector bundle, and upon complex realization, yields a homotopy
      equivalence. The realization $G(\CC)$ is the group $\P(p, pq)$, and by construction $((V-S)/G)(\CC) \to
      \BP(p,pq)$ is a $2p+3$ equivalence. The topological Azumaya algebra classified by the composite $X(\CC) \to
      \BP(p,pq) \to \BPGL_{pq}(\CC)$ is $\Ascr(\CC)$, and by Lemma \ref{lem:paab} it has exponent $p$. Since there is a
      homomorphism $\Br(X) \to \Br(X(\CC))$ taking the class, $\alpha$, of $\Ascr$ to that of $\Ascr(\CC)$, it follows
      that the exponent of $\alpha$ is exactly $p$.

      Returning to algebra, let $K$ be the function field of $X$. There is an inclusion $\Br(X) \subset \Br(K)$, and the class
      $\alpha \in \Br(K)$ corresponds to a central simple algebra $\Ascr \tensor_{\mathcal{O}_X} K$ of degree $pq$ and
      exponent $p$. From the theory of the index of a Brauer class of a field, ~\cite{gille-szamuely}*{Proposition 4.5.13}, we know that there is an Azumaya algebra
      $A'$ of degree $p$ (in fact, a division algebra) over $K$ in the class of $\alpha$.  By Lemma~\ref{lem:dvr},
      therefore, every
      codimension $1$ local ring $\Oscr_{X,x}$ of $X$ has the property that there is some Azumaya algebra of degree $p$
      representing the class of $\alpha$ in $\Br(\Oscr_{X,x})$, which is to say that the class of $A'$ in $\Hoh_\et^1(K,
      \PGL_p)$ lies in the intersection
      \[ \bigcap_{x \in X^{(1)}} \im \left(\Hoh_\et(\Spec \Oscr_{X,x}, \PGL_p) \to \Hoh^1_\et(\Spec K, \PGL_p)\right). \]

      To show that purity does not hold for $\PGL_p$, therefore, it suffices to show that $\alpha \in \Br(X)$ is not
      represented by any Azumaya algebra of degree $p$. By comparison, it is sufficient to show that the class of
      $\Ascr(\CC)$ in $\Br(X(\CC))$, is not represented by any topological Azumaya algebra of degree $p$. 

      Suppose for the sake of contradiction that such a topological Azumaya algebra exists. Let $f: X(\CC) \to \BPGL_p(\CC)$
      be a map classifying it. Since the class of $\Ascr(\CC)$ in $\Br(X(\CC))$ is of exponent $p$, it follows that the
      map $f^* : \Br(\BPGL_p(\CC)) \to \Br(X(\CC))$ is nonzero, and by the universal coefficients and Hurewicz
      theorems, it follows that the map $f_* : \ZZ/p \iso \pi_2(X(\CC)) \to \pi_2(\BPGL_p(\CC)) \iso \ZZ/p$ is nonzero,
      and in particular is an isomorphism.

        We consider the composition
        \begin{equation*}
            \tau_{\leq 2p+2}\BPGL_p(\CC)\rightarrow\tau_{\leq
            2p+2}\BP(p,pq)\rightarrow\tau_{\leq 2p+2}X\rightarrow\tau_{\leq
            2p+2}\BPGL_p(\CC),
        \end{equation*}
        where the first arrow is the $2p+2$-truncation of the $q$-fold block sum map
        $\BPGL_p(\CC)\rightarrow\BP(p,pq)$,
        the second arrow is a homotopy inverse to the homotopy equivalence
        $\tau_{\leq 2p+2}X\rightarrow\tau_{\leq 2p+2}\BP(p,pq)$, and the third arrow is the truncation of $f$. This composition
        induces an isomorphism on $\pi_2$, and hence on $\pi_{2p+1}$, by Theorem~\ref{thm:p}
        (applied to the further truncation $\tau_{\leq 2p+1}$ of the composition).
        But $\pi_{2p+1}\BP(p,pq)=0$, which is a contradiction.      
    \end{proof}

The theorem implies in particular that on $X$ there is an unramified degree-$p$ division algebra over $K$
that does not extend to an Azumaya algebra on $X$. The case $p=2$ was proved first
in~\cite{aw4}.

\begin{corollary}
    Let $p$ be a prime There exists a smooth affine complex variety $X$ of dimension
    $2p+2$ and an unramified division algebra $D$ over $\CC(X)$ of degree $p$ that contains
    no Azumaya maximal order on $X$.
\end{corollary}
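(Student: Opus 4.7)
The plan is to extract the desired division algebra directly from the construction that appears inside the proof of Theorem~\ref{thm:purity}. Take $X$ to be the smooth affine complex variety of dimension $2p+2$ produced there, and let $\alpha\in\Br(X)$ be the Brauer class arising from the canonical map $X\rightarrow\BP(p,pq)$. First I would verify that the restriction $\alpha_K\in\Br(K)$ to the function field $K=\CC(X)$ has index exactly $p$: its exponent divides $p$ because $\alpha$ has order $p$ on $X$, while its index divides $pq$ since $\alpha_K$ is represented by a central simple algebra of degree $pq$. By the theorem of Brauer that the index and exponent share the same prime divisors (\cite{gille-szamuely}*{Proposition 4.5.13}), this forces $\ind(\alpha_K)=p$. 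Thus $\alpha_K$ is represented by a unique division algebra $D$ over $K$ of degree $p$.

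Next I would confirm that $D$ is unramified along $X$. Because $\alpha$ lies in $\Br(X)$, its image in $\Br(\Oscr_{X,P})$ at any codimension-$1$ point $P$ is defined, and agrees with $\alpha_K$ under restriction to $\Br(K)$. By Lemma~\ref{lem:dvr}, every maximal order in $D$ over $\Oscr_{X,P}$ is then Azumaya, so the class $[D]\in\Hoh^1_{\et}(\Spec K,\PGL_p)$ lies in the right-hand intersection appearing in~\eqref{eq:purity}.

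Finally, I would derive the nonexistence of an Azumaya maximal order from the failure of purity already established. If $D$ contained an Azumaya maximal order $A$ on $X$, then $A$ would be a degree-$p$ Azumaya algebra on $X$ whose Brauer class necessarily equals $\alpha$, and whose class in $\Hoh^1_{\et}(X,\PGL_p)$ would map to $[D]$ at the generic point. This would place $[D]$ in the image of the left-hand side of~\eqref{eq:purity}, contradicting Theorem~\ref{thm:purity}. The main obstacle has in fact already been surmounted in the construction of $X$ and the homotopy-theoretic argument of Theorem~\ref{thm:p}; the present corollary amounts to nothing more than the translation between the $\PGL_p$-torsor formulation of purity and the classical language of division algebras and their maximal orders.
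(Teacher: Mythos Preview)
Your proof is correct and matches the paper's approach exactly: the corollary is stated there without proof, being an immediate translation of what the proof of Theorem~\ref{thm:purity} already established into the language of division algebras and maximal orders, and your write-up simply spells out the details (which also appear in the introduction). One small imprecision worth fixing: in your final step you should appeal to what the \emph{proof} of Theorem~\ref{thm:purity} shows---namely that no degree-$p$ Azumaya algebra on $X$ can have Brauer class $\alpha$---rather than to the bare statement that purity fails, since the failure of purity for some unspecified witness would not by itself prevent this particular $[D]$ from lying in the image of~\eqref{eq:purity}.
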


\begin{scholium}\label{thm:patterns}
    Let $p$ be a prime, and let $n_1,\ldots,n_k$ be integers greater than $p$ such that $\gcd_i\{n_i\}=p$.
    There is a smooth affine complex variety $X$ of dimension $2p+2$ and a Brauer class $\alpha\in\Br(X)$
    of exponent $p$ such that there are Azumaya algebras of degrees $n_1,\ldots,n_k$ in the
    class $\alpha$, but no Azumaya algebra of degree $p$.
    \begin{proof}
        The proof is largely the same as that of the theorem, but using the algebraic group
        \begin{equation*}
            \SL_{n_1}\times\cdots\times\SL_{n_k}/\mu_p,
        \end{equation*}
        where $\mu_p$ is embedded diagonally in each of the groups $\SL_{n_i}$.
    \end{proof}
\end{scholium}

\subsection{Local purity}\label{sec:ojanguren}

In contrast to the global failure of purity for $\PGL_p$-torsors exhibited above,
in this section, we give a proof that purity holds for $\Hoh^1_{\et}(X,\PGL_n)$ when
$X$ is the spectrum of a regular local ring $R$ and the Brauer class has exponent invertible in
$X$. Our result is a minor generalization of a recent theorem of Ojanguren~\cite{ojanguren}
and of the local purity result for $\PGL_n$ in characteristic $0$ due to
Panin~\cite{panin-purity}.

To prove the theorem, we recall first a
result of DeMeyer, which is also used by both Ojanguren and Panin.

\begin{theorem}[DeMeyer~\cite{demeyer}*{Corollary 1}]
    Suppose that $R$ is an integral semi-local ring and that $\alpha\in\Br(R)$. Then, there exists a
    unique Azumaya algebra $A$ with class $\alpha$ having no idempotents besides $0$ and
    $1$. Moreover, any other Azumaya algebra with class $\alpha$ is of the form $\mathrm{M}_n(A)$
    for some $n$.
\end{theorem}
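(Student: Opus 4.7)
The plan is to reduce the classification from $R$ to its residue fields, where Wedderburn's theorem gives a unique division algebra in each Brauer class, and then lift back using the standard fact that idempotents lift across the quotient $R \twoheadrightarrow R/\mathrm{rad}(R)$ of a semi-local ring. A second key ingredient is that $\Spec(R)$ is connected, because $R$ is an integral domain, so that ranks of finitely generated projective $R$-modules are globally constant.

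For existence, I would begin with any Azumaya algebra $B$ in the class $\alpha$ and write $R/\mathrm{rad}(R) \cong k_1 \times \cdots \times k_r$, so that $B \otimes_R R/\mathrm{rad}(R) \cong \prod_i \mathrm{M}_{m_i}(D_i)$ with each $D_i$ a division algebra over $k_i$. Select a primitive idempotent $\bar e_i$ in each factor, combine them into an idempotent of $B \otimes_R R/\mathrm{rad}(R)$, and lift to an idempotent $e \in B$. Set $A := eBe$; by Morita theory $A$ is Azumaya with the same Brauer class as $B$, and each reduction $A \otimes_R k_i \cong D_i$ is a division algebra.

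To see that $A$ has only the idempotents $0$ and $1$, observe that any idempotent $f \in A$ yields a direct summand $Af \subseteq A$ whose rank as an $R$-module is locally constant, hence globally constant by connectedness of $\Spec(R)$. Each reduction $f \otimes k_i$ is an idempotent in the division algebra $D_i$, so equals $0$ or $1$. Combining these two facts forces $f \in \{0, 1\}$, and as a byproduct the division algebras $D_i$ all have the same dimension over their residue fields.

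For uniqueness, any other Azumaya algebra $A'$ in class $\alpha$ is Morita equivalent to $A$ over $R$, since two Azumaya algebras with the same Brauer class over a connected base differ up to Morita equivalence by matrix factors. Hence $A' \cong \End_A(P)$ for some right-$A$-progenerator $P$; base-changing to $k_i$ yields $A' \otimes_R k_i \cong \mathrm{M}_{s_i}(D_i)$ where $s_i$ is the $D_i$-rank of $P \otimes_R k_i$. The condition that $A' \otimes_R k_i$ have no nontrivial idempotents forces $s_i = 1$, so $P \otimes_R k_i \cong D_i$; lifting across $R/\mathrm{rad}(R)$ upgrades this to $P \cong A$ as right $A$-modules, whence $A' \cong A$. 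The same Morita setup applied to an arbitrary $B$ in class $\alpha$ produces a right-$A$-progenerator $P$ with $P \otimes_R k_i$ a free $D_i$-module of rank $n$, uniform in $i$ by connectedness of $\Spec(R)$; consequently $P \cong A^n$ and $B \cong \mathrm{M}_n(A)$. The main obstacle is precisely this last use of connectedness to force a single integer $n$: without the integrality hypothesis on $R$ one would only obtain a matrix size varying among the residue fields.
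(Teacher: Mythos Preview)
The existence step contains a real gap. You claim the idempotent $\bar e=(\bar e_1,\ldots,\bar e_r)$, with each $\bar e_i$ primitive in $\mathrm{M}_{m_i}(D_i)$, lifts to $B$; but $B$ is not semiperfect in general, and this particular $\bar e$ need not lift. Indeed, your target $eBe$ would satisfy $(eBe)\otimes_R k_i\cong D_i$, so constancy of $R$-rank would force all the residual indices $d_i=\deg D_i$ to coincide. Nothing guarantees this. For a counterexample, let $R$ be the semilocalization of $\RR[x]$ at $(x)$ and $(x-2)$ and let $B=(-1,\,x-1)_R$, which is Azumaya since $x-1\in R^{\times}$. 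Then $B\otimes k_1\cong\HH$ while $B\otimes k_2\cong\mathrm{M}_2(\RR)$, so $d_1=2\neq 1=d_2$. Here $B$ itself is the minimal algebra $A$, even though one reduction is a full matrix ring: any idempotent $f\in B$ with $\bar f_1=1\in\HH$ has $Bf$ of $R$-rank $4$ everywhere, forcing $\bar f_2=1$ as well. Your uniqueness argument, which relies on each $A'\otimes_R k_i$ being a division ring, fails for the same reason.

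DeMeyer's approach avoids this by classifying indecomposable finitely generated projective left $B$-modules instead. Writing $\bar P=\bigoplus_i S_i^{p_i}$ and $\bar Q=\bigoplus_i S_i^{q_i}$, the constant-rank constraint makes $p_i/q_i$ independent of $i$, so one of $\bar P$, $\bar Q$ is a summand of the other; lifting the split inclusion and applying Nakayama shows $P\cong Q$ by indecomposability. The unique indecomposable $P_0$ then gives $A=\End_B(P_0)$, and every projective is $P_0^{\,n}$, whence $B\cong\mathrm{M}_n(A)$. (The paper itself does not prove this theorem; it only quotes DeMeyer's result.)
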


Now, we prove our local purity result. Define $\Hoh^1(X,\PGL_n)'$ to be the set of
$\PGL_n$-torsors whose associated Brauer class in $\Br(X)$ has exponent invertible in $X$.

\begin{theorem}\label{thm:ojanguren}
    Suppose that $R$ is a regular noetherian integral semi-local ring.
    Then, purity holds for $\Hoh^1(\Spec R,\PGL_n)'$.
    \begin{proof}
        Let $K$ be the function field of $R$, and let $D$ be a degree $n$ central simple
        algebra in
        \begin{equation*}
            \bigcap_{\mathrm{ht}\,P=1}\im\left(\Hoh^1_{\et}(\Spec
            R_p,\PGL_n)'\rightarrow\Hoh^1_{\et}(\Spec K,\PGL_n)'\right).
        \end{equation*}
        Let $m$ be the exponent of $[D]\in\Br(K)$. Because $D$ lifts to every codimension $1$
        local ring, so does the Brauer class. Since $m$ is invertible in $R$ and hence in
        these local rings, this Brauer class lifts to a Brauer class $\alpha\in\Br(R)$, by
        purity for $\Br(R)'$ (see Example~\ref{ex:br}).

        By DeMeyer's theorem, there exists an Azumaya algebra $A$ with Brauer class
        $\alpha$ such that every other Azumaya algebra in the class $\alpha$ is isomorphic to
        $\Mrm_r(A)$ for some $r$. In particular, $\ind(\alpha)=\deg(A)$, where,
        if $X$ is a scheme and $\alpha\in\Br(X)$, we define $\ind(\alpha)$ to be the gcd of the
        degrees of all Azumaya algebras with class $\alpha$.
        On the other hand, by~\cite{aw3}*{Proposition 6.1}, the index of $\alpha$ can be computed
        either over $R$ or over $K$. Thus, $\ind(\alpha)$ divides $\deg(D)$. Therefore,
        $D\iso\mathrm{M}_r(A_K)$ for some integer $r>0$. It follows that $\mathrm{M}_r(A)$
        is a class in $\Hoh^1_{\et}(\Spec R,\PGL_n)'$ that restricts to $D$, which shows that
        purity holds for $\Hoh^1_{\et}(\Spec R,\PGL_n)'$.
    \end{proof}
\end{theorem}

\subsection{Canonical factorization}\label{sec:canonical}

We prove in this section a theorem we view as evidence for Conjecture~\ref{conj:failure} for
all $\PGL_n$. 

Let $m>1$ divide $n$.
Both $\BP(m,n)$ and $\BPGL_m(\CC)$ are equipped with canonical maps to $K(\ZZ/m,2)$. Moreover, a
topological $\PGL_n(\CC)$ bundle,  $P\rightarrow X$, may be lifted to a $\P(m,n)$ bundle if and only if the associated obstruction
class $\delta_n(P)$ in $\Hoh^2(X,\ZZ/n)$ is $m$-torsion. A canonical factorization of Azumaya
algebras with structure group $\P(m,n)$ is a factorization $\BP(m,n)\rightarrow\BPGL_m\rightarrow K(\ZZ/m,2)$. The existence of such a
factorization would give, for every Azumaya algebra $A$ of degree $n$ and $m$-torsion
obstruction class, a canonical Azumaya algebra $B$ of degree $m$ with the same
obstruction class in $\Hoh^2(X,\ZZ/m)$. Unsurprisingly, this cannot occur.

\begin{theorem} \label{thm:fantasytheorem}
    If $n>m$, then there is no canonical factorization $\BP(m,n)\rightarrow\BPGL_m(\CC) \to K(\ZZ/m,2)$.
    \begin{proof}
      Suppose that $\BP(m,n)\rightarrow K(\ZZ/m,2)$ factors through $\BPGL_m(\CC)\rightarrow
      K(\ZZ/m,2)$. Let $\BPGL_m(\CC)\rightarrow\BP(m,n)$ be the
      map induced block-summation. Write $f:\BP(m,n)\rightarrow\BP(m,n)$
      for the composition. This map induces an isomorphism
      $\Hoh^2(\BP(m,n),\ZZ/m)\iso\ZZ/m$, and is in particular not nullhomotopic.

      As $\BP(m,n)$ is homotopy equivalent to $\mathrm{BSU}_n/\mu_m$, 
      there is a complete description of the homotopy-classes of
      self-maps $\BP(m,n) \to \BP(m,n)$ due to Jackowski, McClure, and Oliver~\cite{jackowski-mcclure-oliver-1}*{Theorem 2}.
      Their theorem says we can factor $f$ as $\Brm\alpha\circ\psi^k$, where $\alpha$ is an
      outer automorphism of $P(m,n)$, and
      $\psi^k$ is an unstable Adams operation on $\BP(m,n)$, for some $k\geq 0$ prime to the
      order of the Weyl group of $P(m,n)$, which is $n!$.  The map $\psi^k$
      induces multiplication by $k^i$ on $\Hoh^{2i}(\BP(m,n),\QQ)$. In particular a map $\BP(m,n) \to \BP(m,n)$ is either nullhomotopic or
      induces an isomorphism on rational cohomology.

      The rational cohomology of $\BP(m,n)$ is
        \begin{equation*}
            \Hoh^*(\BP(m,n),\QQ)\iso\QQ[c_2,\ldots,c_n], \quad c_i \in \Hoh^{2i}( \BP(m,n), \QQ),
        \end{equation*}
        while that of $\BPGL_m(\CC)$ is
        \begin{equation*}
            \Hoh^*(\BPGL_m(\CC),\QQ)\iso\QQ[c_2,\ldots,c_m], \quad c_i \in \Hoh^{2i}( \BP(m,n), \QQ).
        \end{equation*}
        In particular,
        \begin{equation*}
            \dim\Hoh^{2m+2}(\BP(m,n),\QQ)=\dim\Hoh^{2m+2}(\BPGL_m(\CC),\QQ)+1,
        \end{equation*}
        so that $f$ cannot induce an isomorphism on rational cohomology, and must be nullhomotopic, a contradiction.
    \end{proof}
\end{theorem}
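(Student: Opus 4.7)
My plan is to argue by contradiction, in the spirit of the other topological arguments in the paper, by exhibiting a self-map of $\BP(m,n)$ which cannot exist for cohomological reasons.

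First I would assume that a canonical factorization $\BP(m,n) \to \BPGL_m(\CC) \to K(\ZZ/m,2)$ is given. The natural block-sum map $\BPGL_m(\CC) \to \BP(m,n)$, induced by the inclusion $\SL_m \times \SL_m \times \cdots \times \SL_m \to \SL_n$ and taking the quotient by the appropriate copy of $\mu_m$, can then be composed with the hypothesized factorization to yield a self-map $f: \BP(m,n) \to \BP(m,n)$. Both $\BP(m,n) \to K(\ZZ/m, 2)$ and $\BPGL_m(\CC) \to K(\ZZ/m, 2)$ induce isomorphisms on $\pi_2$ and hence on $\Hoh^2(-, \ZZ/m)$, so $f$ induces an isomorphism on $\Hoh^2(\BP(m,n), \ZZ/m) \iso \ZZ/m$. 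In particular $f$ is not nullhomotopic.

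The main input is then the classification theorem of Jackowski--McClure--Oliver for self-maps of $\BP(m,n) \simeq \mathrm{BSU}_n / \mu_m$, which says that every such self-map is homotopic to $\Brm\alpha \circ \psi^k$ for some outer automorphism $\alpha$ and some unstable Adams operation $\psi^k$ with $k$ prime to $n!$. A key feature of this classification is that on rational cohomology $\psi^k$ acts by multiplication by $k^i$ on $\Hoh^{2i}(-,\QQ)$; hence every non-nullhomotopic self-map induces an isomorphism on rational cohomology. This gives a strong numerical restriction that I will use to obtain the contradiction.

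The final step is a dimension count. Since $\Hoh^*(\BP(m,n),\QQ) \iso \QQ[c_2,\ldots,c_n]$ while $\Hoh^*(\BPGL_m(\CC),\QQ) \iso \QQ[c_2,\ldots,c_m]$, and since $n > m$, the generator $c_{m+1}$ appears in degree $2m+2$ on the left but not on the right. Thus $\dim_\QQ \Hoh^{2m+2}(\BP(m,n),\QQ) > \dim_\QQ \Hoh^{2m+2}(\BPGL_m(\CC),\QQ)$, so the map $f$, factoring through $\BPGL_m(\CC)$ on rational cohomology, cannot be surjective in degree $2m+2$. This contradicts the classification, which forced $f$ to be either nullhomotopic or a rational equivalence. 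The main obstacle is really just invoking Jackowski--McClure--Oliver correctly for the quotient group $\P(m,n)$ (rather than $\PGL_n$ itself); once that is in hand, the argument is a short comparison of rational Chern-class polynomial rings.
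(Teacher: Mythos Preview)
Your proposal is correct and follows essentially the same argument as the paper: both compose the hypothetical factorization with the block-sum map to obtain a non-nullhomotopic self-map of $\BP(m,n)$, invoke the Jackowski--McClure--Oliver classification to conclude that such a map must be a rational equivalence, and then derive a contradiction from the dimension count $\dim_\QQ \Hoh^{2m+2}(\BP(m,n),\QQ) = \dim_\QQ \Hoh^{2m+2}(\BPGL_m(\CC),\QQ)+1$.
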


The argument above has philosophically informed the authors' work both in this paper and in \cite{aw4}.
In order to construct algebraic counterexamples, however, we must use complex algebraic
varieties $X$ that approximate $\BPGL_m(\CC)$ in the sense that
there exists a map $X(\CC) \to \BPGL_m(\CC)$ induced by an algebraic $\PGL_m$-torsor on $X$
and inducing an isomorphism on homotopy groups in a range
dimensions, and for these we cannot bring the strength of~\cite{jackowski-mcclure-oliver-1} to bear. We have made do with \textit{ad hoc}
arguments that furnish obstructions in known, bounded dimension to maps $\BPGL_m(\CC) \to \BPGL_m(\CC)$. For instance, the topological
plank in the argument proving that purity fails for $\Hoh^1_{\et}(X,\PGL_p)$ is an obstruction to a map
\begin{equation*}
    \tau_{\leq 2p+1}\BP(p,pq) \to \tau_{\leq 2p+1}\BPGL_p(\CC)
\end{equation*}
that induces an isomorphism on Brauer group. This obstruction depends on Theorem
\ref{thm:p}, which describes a restriction on maps
\[ \tau_{\le 2p+1} \BPGL_p \to \tau_{\le 2p+1} \BPGL_p, \]
in that it says a map inducing an isomorphism on $\pi_2$ must necessarily also induce an isomorphism on the $p$--primary part of
$\pi_{2p+1}$. To prove Conjecture \ref{conj:failure} for all $\PGL_m$, one might only have
to find an obstruction to the existence of
maps $X \to \BPGL_m$ where $X$ approximates $\BP(m,mq)$, with $q>1$ prime to $m$.

\subsection{The Witt group}\label{sec:witt}

Our second application of topology to purity is to give a new example where purity fails for
the cohomological filtration on the Witt group.

\begin{example}
    Local purity is known for the Witt group $\W(\Spec R)$ whenever $R$ is a regular
    noetherian local ring containing a field
    of characteristic not $2$ by work of Ojanguren and
    Panin~\cite{ojanguren-panin}.
\end{example}

Given the positive results for the Brauer group, it is natural to ask the following question.

\begin{question}
    Does purity hold for $\W(X)$ when $X$ is an regular excellent noetherian integral scheme
    having no points of characteristic $2$?
\end{question}

It is known that purity holds for $\W(X)$ when $X$ is a regular noetherian separated
integral scheme of Krull dimension at most $4$ and
$2$ is invertible in $\Gamma(X,\mathscr{O}_X)$ by
Balmer-Walter~\cite{balmer-walter}*{Corollary~10.3}. However,
Totaro~\cite{totaro-injectivity} showed that the injectivity property fails for the Witt
group: there is a smooth affine complex $5$-fold such that $\W(X)\rightarrow\W(K)$ is
not injective. Thus, it might be natural to guess that the purity property fails as well.
For an extensive overview of results on purity for the
Witt group, see Auel~\cite{auel-remarks}.

Let $\I^1(X)$ be the ideal of $\W(X)$ generated by even-dimensional quadratic spaces. There
is a discriminant map $\I^1(X)\rightarrow\Hoh^1_{\et}(X,\mu_2)$. Let $\I^2(X)$ be the
kernel. There is a map $\I^2(X)\rightarrow{_2\Br(X)}$, called the Clifford invariant map.
Denote by $\I^3(X)$ the kernel. It is known that purity fails for $\I^2(X)/\I^3(X)$. The first examples were
due to Parimala and Sridharan~\cite{parimala-sridharan-2}, who showed that it fails for some
affine bundle torsors over smooth projective $p$-adic
curves. We include another example below, which uses a smooth affine
variety we constructed in~\cite{aw3}, giving the first examples over $\CC$.

\begin{example} \label{ex:wittpurity}
    Let $X$ be the smooth affine $5$-dimensional variety over $\CC$ constructed
    in~\cite{aw3}*{Theorem D}, having a Brauer class $\alpha\in\Br(X)$ of exponent $2$ that is not in
    the image of the Clifford invariant map $\I^2(X)\rightarrow{_2\Br(X)}$. Consider the
    commutative diagram
    \begin{equation*}
        \xymatrix{
            & 0\ar[d] & 0\ar[d] & 0\ar[d]\\
            & \I^3(X)\ar[r]\ar[d] & \I^3(K)\ar[r]\ar[d] & \bigoplus_{p\in X^{(1)}}\I^2(k(p))\ar[d]\\
            & \I^2(X)\ar[r]\ar[d] & \I^2(K)\ar[r]\ar[d] & \bigoplus_{p\in X^{(1)}}\I^1(k(p))\ar[d]\\
            0\ar[r] & _2\Br(X)\ar[r] & _2\Br(K)\ar[r]\ar[d] & \bigoplus_{p\in X^{(1)}}\Hoh^1(k(p),\ZZ/2)\ar[d]\\
            & & 0 & 0\\
        }
    \end{equation*}
    where the columns and the bottom row are exact, and where $\I^2(X)$ (resp. $\I^3(X)$)
    maps into the kernel of the map $\I^2(K)\rightarrow\bigoplus\I^1(k(p))$ (resp
    $\I^3(K)\rightarrow\bigoplus\I^2(k(p))$). The image of $\alpha$ in $_2\Br(K)$
    is in the image of the map $\I^2(K)\rightarrow{_2\Br(K)}$ by Merkurjev's theorem; say it is the Clifford
    invariant of $\sigma\in\I^2(K)$. Then, $\sigma$ is unique up to an element of $\I^3(K)$. On
    the other hand, the ramification classes $\partial_p(\sigma)$ are all in $\I^2(k(p))$.
    Hence, $\overline{\sigma}\in\I^2(K)/\I^3(K)$ is unramified. But, by construction, it is not
    in the image of $\I^2(X)/\I^3(K)\rightarrow\I^2(K)/\I^3(K)$.
\end{example}

This is the first such example known for a variety over an algebraically closed field. It
has the added advantage that it is not explained by the presence of line-bundle
valued quadratic forms, as explained in~\cite{aw3}*{Section 7}.

\begin{bibdiv}
\begin{biblist}

\bib{aw1}{article}{
    author={Antieau, Benjamin},
    author={Williams, Ben},
    title={The period-index problem for twisted topological $K$-theory},
    journal={Geom. Topol.},
    volume={18},
    date={2014},
    number={2},
    pages={1115--1148},
    issn={1465-3060},
}

\bib{aw3}{article}{
    author={Antieau, Benjamin},
    author={Williams, Ben},
    title={The topological period-index problem over 6-complexes},
    journal={J. Topol.},
    volume={7},
    date={2014},
    number={3},
    pages={617--640},
    issn={1753-8416},
}

\bib{aw4}{article}{
    author={Antieau, Benjamin},
    author={Williams, Ben},
    title={Unramified division algebras do not always contain Azumaya maximal orders},
    journal={Invent. Math.},
    volume={197},
    date={2014},
    number={1},
    pages={47--56},
    issn={0020-9910},
}

\bib{sga4-3}{book}{
    title={Th\'eorie des topos et cohomologie \'etale des sch\'emas. Tome 3},
    series={Lecture Notes in Mathematics, Vol. 305},
    note={S\'eminaire de G\'eom\'etrie Alg\'ebrique du Bois-Marie 1963--1964 (SGA 4);
    Dirig\'e par M. Artin, A. Grothendieck et J. L. Verdier. Avec la collaboration de P. Deligne et B. Saint-Donat},
    publisher={Springer-Verlag},
    place={Berlin},
    date={1973},
    pages={vi+640},
}



\bib{auel}{article}{
    author = {Auel, Asher},
    title = {Surjectivity of the total Clifford invariant and Brauer dimension},
    journal = {ArXiv e-prints},
    eprint = {http://arxiv.org/abs/1108.5728},
    year = {2011},
}

\bib{auel-remarks}{article}{
    author={Auel, Asher},
    title={Remarks on the Milnor conjecture over schemes},
    conference={
        title={Galois-Teichmüller Theory and Arithmetic Geometry (Kyoto, 2010)},
    },
    book={
        series={Advanced Studies in Pure Mathematics},
        volume={63},
        editor={Nakamura, H.},
        editor={Pop, F.},
        editor={Schneps, L.},
        editor={Tamagawa, A.},
    },
    year={2012},
    pages={1--30},
}


\bib{auslander-goldman}{article}{
    author={Auslander, Maurice},
    author={Goldman, Oscar},
    title={The Brauer group of a commutative ring},
    journal={Trans. Amer. Math. Soc.},
    volume={97},
    date={1960},
    pages={367--409},
    issn={0002-9947},
}

\bib{balmer-walter}{article}{
    author={Balmer, Paul},
    author={Walter, Charles},
    title={A Gersten-Witt spectral sequence for regular schemes},
    journal={Ann. Sci. \'Ecole Norm. Sup. (4)},
    volume={35},
    date={2002},
    number={1},
    pages={127--152},
    issn={0012-9593},
}



\bib{bott}{article}{
    author={Bott, Raoul},
    title={The space of loops on a Lie group},
    journal={Michigan Math. J.},
    volume={5},
    date={1958},
    pages={35--61},
    issn={0026-2285},
}

\bib{bousfield-kan}{book}{
    author={Bousfield, A. K.},
    author={Kan, D. M.},
    title={Homotopy limits, completions and localizations},
    series={Lecture Notes in Mathematics, Vol. 304},
    publisher={Springer-Verlag},
    place={Berlin},
    date={1972},
    pages={v+348},
}


\bib{cartan}{article}{
    author = {Cartan, H.},
    title = {D{\'e}termination des alg{\`e}bres {$\Hoh_*(\pi,n;\ZZ)$}},
    journal = {S{\'e}minaire H. Cartan},
    volume = {7},
    number = {1},
    pages = {11-01--11-24},
    publisher = {Secr{\'e}tariat math{\'e}matique},
    address = {Paris},
    year = {1954/1955},
}

\bib{chernousov-panin}{article}{
    author={Chernousov, Vladimir},
    author={Panin, Ivan},
    title={Purity of $G_2$-torsors},
    journal={C. R. Math. Acad. Sci. Paris},
    volume={345},
    date={2007},
    number={6},
    pages={307--312},
    issn={1631-073X},
}

\bib{colliot-thelene-sansuc}{article}{
    author={Colliot-Th{\'e}l{\`e}ne, J.-L.},
    author={Sansuc, J.-J.},
    title={Fibr\'es quadratiques et composantes connexes r\'eelles},
    journal={Math. Ann.},
    volume={244},
    date={1979},
    number={2},
    pages={105--134},
    issn={0025-5831},
}

\bib{colliot-thelene-birational}{article}{
    author={Colliot-Th{\'e}l{\`e}ne, J.-L.},
    title={Birational invariants, purity and the Gersten conjecture},
    conference={
        address={Santa Barbara, CA},
        date={1992},
    },
    book={
        series={Proc. Sympos. Pure Math.},
        volume={58},
        publisher={Amer. Math. Soc.},
        place={Providence, RI},
    },
    date={1995},
    pages={1--64},
}


\bib{colliot-thelene-voisin}{article}{
    author={Colliot-Th{\'e}l{\`e}ne, Jean-Louis},
    author={Voisin, Claire},
    title={Cohomologie non ramifi\'ee et conjecture de Hodge enti\`ere},
    journal={Duke Math. J.},
    volume={161},
    date={2012},
    number={5},
    pages={735--801},
    issn={0012-7094},
}


\bib{dejong-gabber}{article}{
    author={de Jong, A. J.},
    title={A result of Gabber},
    eprint={http://www.math.columbia.edu/~dejong/},
}


\bib{demeyer}{article}{
    author={DeMeyer, F. R.},
    title={Projective modules over central separable algebras},
    journal={Canad. J. Math.},
    volume={21},
    date={1969},
    pages={39--43},
    issn={0008-414X},
}

\bib{ekedahl}{article}{
    author = {Ekedahl, Torsten},
    title = {Approximating classifying spaces by smooth projective varieties},
    journal = {ArXiv e-prints},
    eprint =  {http://arxiv.org/abs/0905.1538},
    year = {2009},
}

\bib{fedorov-panin}{article}{
    author = {Fedorov, Roman},
    author = {Panin, Ivan},
    title = {Proof of Grothendieck-Serre conjecture on principal bundles over regular local rings containing infinite fields},
    journal = {ArXiv e-prints},
    eprint =  {http://arxiv.org/abs/1211.2678},
    year = {2012},
}

\bib{fujiwara}{article}{
    author={Fujiwara, Kazuhiro},
    title={A proof of the absolute purity conjecture (after Gabber)},
    conference={
        title={Algebraic geometry 2000, Azumino (Hotaka)},
    },
    book={
        series={Adv. Stud. Pure Math.},
        volume={36},
        publisher={Math. Soc. Japan},
        place={Tokyo},
    },
    date={2002},
    pages={153--183},
}

\bib{gabber}{article}{
    author={Gabber, Ofer},
    title={Some theorems on Azumaya algebras},
    conference={
    title={The Brauer group},
    address={Sem., Les Plans-sur-Bex},
    date={1980},
    },
    book={
    series={Lecture Notes in Math.},
    volume={844},
    publisher={Springer},
    place={Berlin},
    },
    date={1981},
    pages={129--209},
}

\bib{gabber-note}{article}{
    author={Gabber, Ofer},
    title={A note on the unramified Brauer group and purity},
    journal={Manuscripta Math.},
    volume={95},
    date={1998},
    number={1},
    pages={107--115},
    issn={0025-2611},
}

\bib{gille-szamuely}{book}{
    author={Gille, Philippe},
    author={Szamuely, Tam{\'a}s},
    title={Central simple algebras and Galois cohomology},
    series={Cambridge Studies in Advanced Mathematics},
    volume={101},
    publisher={Cambridge University Press},
    place={Cambridge},
    date={2006},
    pages={xii+343},
    isbn={978-0-521-86103-8},
    isbn={0-521-86103-9},
}

\bib{goresky-macpherson}{book}{
    author={Goresky, Mark},
    author={MacPherson, Robert},
    title={Stratified Morse theory},
    series={Ergebnisse der Mathematik und ihrer Grenzgebiete (3)},
    volume={14},
    publisher={Springer-Verlag},
    place={Berlin},
    date={1988},
    pages={xiv+272},
    isbn={3-540-17300-5},
}

\bib{grothendieck-brauer-1}{article}{
    author={Grothendieck, Alexander},
    title={Le groupe de Brauer. I. Alg\`ebres d'Azumaya et interpr\'etations diverses},
    conference={
    title={S\'eminaire Bourbaki, Vol.\ 9},
    },
    book={
    publisher={Soc. Math. France},
    place={Paris},
    },
    date={1995},
    pages={Exp.\ No.\ 290, 199--219},
}

%
 
\bib{hoobler}{article}{
    author={Hoobler, Raymond T.},
    title={A cohomological interpretation of Brauer groups of rings},
    journal={Pacific J. Math.},
    volume={86},
    date={1980},
    number={1},
    pages={89--92},
    issn={0030-8730},
}


\bib{jackowski-mcclure-oliver-1}{article}{
    author={Jackowski, Stefan},
    author={McClure, James},
    author={Oliver, Bob},
    title={Homotopy classification of self-maps of $BG$ via $G$-actions. I},
    journal={Ann. of Math. (2)},
    volume={135},
    date={1992},
    number={1},
    pages={183--226},
    issn={0003-486X},
}


\bib{jouanolou}{article}{
    author={Jouanolou, J. P.},
    title={Une suite exacte de Mayer-Vietoris en $K$-th\'eorie alg\'ebrique},
    conference={
    title={Algebraic $K$-theory, I: Higher $K$-theories (Proc. Conf.,
    Battelle Memorial Inst., Seattle, Wash., 1972)},
    },
    book={
    publisher={Springer},
    place={Berlin},
    },
    date={1973},
    pages={293--316. Lecture
    Notes in Math., Vol.
    341},
}

\bib{mumford-fogarty-kirwan}{book}{
    author={Mumford, D.},
    author={Fogarty, J.},
    author={Kirwan, F.},
    title={Geometric invariant theory},
    series={Ergebnisse der Mathematik und ihrer Grenzgebiete (2) [Results in
    Mathematics and Related Areas (2)]},
    volume={34},
    edition={3},
    publisher={Springer-Verlag, Berlin},
    date={1994},
    pages={xiv+292},
    isbn={3-540-56963-4},
}

\bib{ojanguren}{article}{
    author = {Ojanguren, Manuel},
    title = {Wedderburn's theorem for regular local rings},
    journal = {Linear Algebraic Groups Preprint Server},
    eprint = {http://www.mathematik.uni-bielefeld.de/LAG/man/500.html},
    year = {2013},
}

\bib{ojanguren-panin}{article}{
    author={Ojanguren, Manuel},
    author={Panin, Ivan},
    title={A purity theorem for the Witt group},
    journal={Ann. Sci. \'Ecole Norm. Sup. (4)},
    volume={32},
    date={1999},
    number={1},
    pages={71--86},
    issn={0012-9593},
}

\bib{panin-purity}{article}{
    author={Panin, I. A.},
    title={Purity conjecture for reductive groups},
    journal={Vestnik St. Petersburg Univ. Math.},
    volume={43},
    date={2010},
    number={1},
    pages={44--48},
    issn={1063-4541},
}

\bib{parimala-sridharan-2}{article}{
    author={Parimala, R.},
    author={Sridharan, R.},
    title={Nonsurjectivity of the Clifford invariant map},
    note={K. G. Ramanathan memorial issue},
    journal={Proc. Indian Acad. Sci. Math. Sci.},
    volume={104},
    date={1994},
    number={1},
    pages={49--56},
    issn={0253-4142},
}




\bib{schoen}{article}{
    author={Schoen, Chad},
    title={Complex varieties for which the Chow group mod $n$ is not finite},
    journal={J. Algebraic Geom.},
    volume={11},
    date={2002},
    number={1},
    pages={41--100},
    issn={1056-3911},
}


\bib{totaro}{article}{
    author={Totaro, Burt},
    title={The Chow ring of a classifying space},
    conference={
    title={Algebraic $K$-theory},
    address={Seattle, WA},
    date={1997},
    },
    book={
    series={Proc. Sympos. Pure Math.},
    volume={67},
    publisher={Amer. Math. Soc.},
    place={Providence, RI},
    },
    date={1999},
    pages={249--281},
}

\bib{totaro-torsion}{article}{
    author={Totaro, Burt},
    title={Torsion algebraic cycles and complex cobordism},
    journal={J. Amer. Math. Soc.},
    volume={10},
    date={1997},
    number={2},
    pages={467--493},
    issn={0894-0347},
}

\bib{totaro-injectivity}{article}{
    author={Totaro, Burt},
    title={Non-injectivity of the map from the Witt group of a variety to the
    Witt group of its function field},
    journal={J. Inst. Math. Jussieu},
    volume={2},
    date={2003},
    number={3},
    pages={483--493},
    issn={1474-7480},
}

\bib{vezzosi}{article}{
   author={Vezzosi, Gabriele},
   title={On the Chow ring of the classifying stack of ${\rm PGL}_{3,\mathbf{C}}$},
   journal={J. Reine Angew. Math.},
   volume={523},
   date={2000},
   pages={1--54},
   issn={0075-4102},
}

\bib{vistoli}{article}{
    author={Vistoli, Angelo},
    title={On the cohomology and the Chow ring of the classifying space of ${\rm PGL}_p$},
    journal={J. Reine Angew. Math.},
    volume={610},
    date={2007},
    pages={181--227},
    issn={0075-4102},
}




\end{biblist}
\end{bibdiv}

\end{document}